\documentclass[a4paper,11pt]{article}
\usepackage[left=1.8cm,right=1.8cm,top=2.5cm,bottom=2.5cm]{geometry}
\usepackage[english]{babel}
\usepackage{authblk}
\usepackage{amssymb,amsmath,amsthm,mathscinet}
\usepackage[shortlabels]{enumitem}
\usepackage{bm}
\usepackage{graphicx,subcaption}
\usepackage{hyperref}
\usepackage{cleveref}
%% tikz figures %%%%%%%%%%
\usepackage{tikz,pgfplots}
\pgfplotsset{compat=1.15}
\usepackage{mathrsfs}
\usetikzlibrary{arrows}

\usepackage{color}
\usepackage{sidenotes}

\numberwithin{equation}{section}

\newcommand{\xx}{\mathbf{x}}
\newcommand{\ii}{\mathbf{i}}
\newcommand{\jj}{\mathbf{j}}
\newcommand{\uu}{\mathbf{u}}

\newcommand{\yy}{\mathbf{y}}
\newcommand{\YY}{\mathbf{Y}}

\newcommand{\der}{\textnormal{D}}

\newcommand{\mcA}{\mathcal{A}}

\newcommand{\mcI}{\mathcal{I}}
\newcommand{\mcL}{\mathcal{L}}
\newcommand{\mcN}{\mathcal{N}}
\newcommand{\mcU}{\mathcal{U}}
\newcommand{\Rset}{\mathbb{R}}
\newcommand{\Nset}{\mathbb{N}}
\newcommand{\Pprob}{\mathbb{P}}

\newcommand{\aalpha}{\boldsymbol{\alpha}}

\newtheorem{prop}{Proposition}[section]

\newtheorem{hp}{Assumption}[section]
\newtheorem{defn}{Definition}[section]
\newtheorem{remark}{Remark}[section]

\graphicspath{ {./figures_paper} }

\title{Uncertainty quantification analysis of bifurcations of the Allen--Cahn equation with random coefficients}	
\author[1,2]{Christian Kuehn}
\author[1]{Chiara Piazzola}
\author[1]{Elisabeth Ullmann}
\affil[1]{Department of Mathematics, TUM School of Computation, Information and Technology, Technical University of Munich, Germany}
\affil[2]{Munich Data Science Institute, Technical University of Munich, Germany}
\date{}

\begin{document}
	
\maketitle
	
\noindent
% Possible target journal: %\emph{Nonlinear dynamics (Springer)} \url{https://www.springer.com/journal/11071}
%\emph{IMA Journal of Applied Mathematics} \url{https://academic.oup.com/imamat}
% \emph{Mathematical Models and Methods in Applied sciences} \newline \url{https://www.worldscientific.com/toc/m3as/current}
%
%\emph{GAMM-Mitteilungen}\url{https://onlinelibrary.wiley.com/journal/15222608}
 
% \item \emph{Proceedings of ENUMATH 2023 (Springer)} \url{https://enumath2023.com/proceedings/}
% \item Special issue ``Numerical Approaches to Dynamical Systems'' with editors: Dimitri Breda, Cinzia Elia and Christian Pötzsche in \emph{Discrete and Continuous Dynamical Systems - Series B (American Institute of Mathematical Sciences)} \url{https://www.aimsciences.org/DCDS-B}
		
\section*{Abstract}
%Nonlinear partial differential equations are the ubiquitous tool for describing physical systems. Often, to account for modeling uncertainties, both deterministic parameters and random coefficients are included. 
%In this work we consider the Allen--Cahn equation, a prototypical problem in nonlinear dynamics that exhibits bifurcations in correspondence of variations of a deterministic parameter, and incorporate random spatially-heterogeneous effects by introducing a random field in the linear part of the equation. This results in the bifurcation points and bifurcation curves becoming random objects. We first perform a pathwise analysis to prove existence of pitchfork bifurcations. We make use of this information to propose an efficient use of the polynomial chaos expansion to tackle the forward uncertainty quantification analysis of bifurcations and bifurcation curves. 

In this work we consider the Allen--Cahn equation, a prototypical model problem in nonlinear dynamics that exhibits bifurcations corresponding to variations of a deterministic bifurcation parameter. Going beyond the state-of-the-art, we introduce a random coefficient in the linear reaction part of the equation, thereby accounting for random, spatially-heterogeneous effects. Importantly, we assume a spatially constant, deterministic mean value of the random coefficient. We show that this mean value is in fact a bifurcation parameter in the Allen--Cahn equation with random coefficients. Moreover, we show that the bifurcation points and bifurcation curves become random objects. We consider two distinct modelling situations: $(i)$ for a spatially homogeneous coefficient we derive analytical expressions for the distribution of the bifurcation points and show that the bifurcation curves are random shifts of a fixed reference curve; $(ii)$ for a spatially heterogeneous coefficient we employ a generalized polynomial chaos expansion to approximate the statistical properties of the random bifurcation points and bifurcation curves. We present numerical examples in 1D physical space, where we combine the popular software package Continuation Core and Toolboxes (COCO) for numerical continuation and the Sparse Grids Matlab Kit for the polynomial chaos expansion. Our exposition addresses both, dynamical systems and uncertainty quantification, highlighting how analytical and numerical tools from both areas can be combined efficiently for the challenging uncertainty quantification analysis of bifurcations in random differential equations.  
\bigskip

\noindent
\textbf{Keywords:} Nonlinear dynamical systems, random ordinary differential equations, dynamical systems, generalized polynomial chaos expansion, stochastic collocation, numerical continuation.

\bigskip

\noindent
\textbf{AMS Subject Classification:} 65P30, 65M99, 35R60, 37H20

\section{Introduction}
	
    The behavior of physical systems is formalized by the mathematical concept of dynamical systems. 
    This involves ordinary or partial differential equations (ODEs or PDEs, respectively) that describe the dependence of the state of the system on time. 
    In this work we are interested in the qualitative changes in the steady states of a dynamical system under variations of a model parameter.
    %that may be interpreted as a control parameter. 
    For example, it can happen that a stable state loses stability in correspondence of a critical value of such parameter, leading to the creation of a new stable state branching out from the previous one. 
    This is the case of the following ODE   
    \begin{equation}\label{eq:ode_pitchfork}
    \dot{x}(t) = px(t) - x(t)^3,
    \end{equation}
    where $x\colon \Rset \rightarrow \Rset$ is the state and $p \in \Rset$ is the deterministic parameter the impact of which we want to monitor. 
    The three steady state solutions are $x_1 = 0$ and $x_{2,3} = \pm \sqrt{p}$, with the latter existing for $p\geq 0$. 
    At $p=0$ the number of steady states changes and uniqueness of the solution is lost: there is one steady state for $p\leq 0$ and three steady states for $p> 0$. 
    The point $(p,x)=(0,0)$ and the parameter $p$  are usually called \textit{bifurcation point} and \textit{bifurcation parameter}, respectively. 
    Moreover, the stability of the trivial state $x_1$ changes at this point: it is globally linearly stable for $p<0$ and unstable for $p>0$. Instead, the nontrivial states $x_{2,3}$ that exist for $p>0$ are always stable. 
    This type of bifurcating behavior is called supercritical pitchfork bifurcation; we refer to \cite{strogatz2000,kielhofer2014} for an overview as well as for a broader introduction to the field of bifurcation theory.
    
    The starting point of our work is the Allen--Cahn equation \cite{henry1981,kuehn2019:book,robinson2001,uecker2017}
    \begin{equation}\label{eq:AC_general}
        %\begin{aligned}
            \partial_t u = \Delta u + p u - u^3, \quad p \in \Rset%, \\
        %               u & = 0,  \quad \quad & \xx \in \partial D,
        %\end{aligned}
    \end{equation}
    on an open bounded domain $D \subset \mathbb{R}^d$, $d=1,2,3$ with sufficiently smooth boundary $\partial D$, together with homogeneous Dirichlet boundary conditions. 
    The PDE in \eqref{eq:AC_general} is a reaction-diffusion equation, where the diffusion is modeled by the Laplace operator and the nonlinear reaction part has exactly the form of the ODE vector field in \eqref{eq:ode_pitchfork}. The Allen--Cahn equation is a prototypical model for bistability. It appears in a wide variety of applications with different names such as Nagumo equation in neuroscience, Chafee--Infante problem in nonlinear dynamics, real Ginzburg--Landau equation in statistical physics, $\Phi^4$ model in quantum field theory, population dynamics with Allee effect in ecology, or Schl\"ogl model in chemistry; the name Allen--Cahn originates in material science. This indicates already the broad applicability and importance of \eqref{eq:AC_general}. In addition, bistable reaction-diffusion equations have served as a major test problem for many mathematical techniques, e.g., in the theory of attractors, inertial manifolds and as normal forms. Similarly to \eqref{eq:ode_pitchfork}, the Allen--Cahn PDE exhibits supercritical pitchfork bifurcating behavior, see e.g.\ \cite{henry1981}. 
    To account for spatially-heterogeneous effects we replace the parameter $p$ in \eqref{eq:AC_general} with an $\xx$-dependent coefficient $q = q(\xx)$, $\xx \in D$, thereby following closely \cite{Kao2014}.
    % \textcolor{red}{To account for heterogeneous effects in the spatial domain we allow a spatially-dependent coefficient, now called $q = q(\xx)$, $\xx \in D$ instead of the homogeneous coefficient $p \in \Rset$, thereby following closely \cite{Kao2014}.} 
    We remark that this is only one way to embed spatial heterogeneity into \eqref{eq:AC_general}. 
    % \textcolor{red}{spatially-heterogeneous effects.} 
    Another possibility is the addition of forcing terms such as in \cite{Bastiaansen2022,Baer2002} also taking into account time-dependence as in e.g.~\cite{Hutt2008,Lefebvre2017}. The general question of how spatial heterogeneities affect the behavior of a dynamical system has been investigated in different contexts, e.g., for quantum systems~\cite{AizenmanWarzel}, for biological pattern formation~\cite{Page2003}, in climate dynamics~\cite{Bastiaansen2022}, and for chemistry~\cite{Baer2002}.
    In this work we go further and assume that the coefficient $q$ is a random field. That is, $q = q(\xx,\omega)$, with $\xx \in D$, $\omega \in \Omega$, and $\Omega$ indicating the set of elementary events in a probability space.
    This allows us to account for statistical variations and the lack of knowledge in the dynamical system, thereby incorporating information that is not captured by the PDE alone. 
    Hence, in this work we consider the following equation
    \begin{equation}\label{eq:AC_random_intro}
         \partial_t u = \Delta u + q(\xx,\omega) u - u^3.
    \end{equation}
    Importantly, we connect this model to the bifurcation theory for dynamical systems with deterministic coefficients by assuming that a deterministic hyper-parameter $p$ in the random field $q(\xx,\omega)$ induces bifurcations and is thus a bifurcation parameter.
    This novel viewpoint is a major contribution of our work. The bifurcation parameter is always denoted by $p$ in the following.
    
   Due to the presence of the random PDE coefficient we are confronted with a random dynamical system. Currently, the development of a mathematical theory and efficient computational methods for forward uncertainty propagation for bifurcation problems has just started to develop. For ODEs, the computation of random bifurcation points has been considered in~\cite{KuehnLux,Luxetal}, the study of random periodic orbits and invariant manifolds was addressed in~\cite{BredenKuehn1}, and the estimation of basins and attractors was discussed in \cite{Benedetti2023}. For PDEs, specific attempts were made, e.g., to derive the probability density function of time dependent solutions in the case of uncertain initial conditions \cite{Jornet2023}, for the computation of steady states of parabolic PDEs in \cite{Breden}, for bifurcations in fluid-dynamics problems in \cite{Lemaitre2004,sousedik2022,Venturi2010,gonnella2024stochastic}, and in the context of pattern formation in \cite{Bashkirtseva2023}. 
   %\textcolor{red}{For PDEs, the situation is even less developed, ~\cite{Bastiaansen2022,Breden,Jornet2023,Bashkirtseva2023,sousedik2022,gonnella2024stochastic}.} 
    
    The Allen--Cahn equation with random coefficients \eqref{eq:AC_random_intro} is an infinite-dimensional problem both in physical and probability space. In this work we assume a finite-dimensional parametrization of the probability space that results in approximating the random field $q(\xx,\omega)$ (and hence the solution of \eqref{eq:AC_random_intro}) by a finite number $N$ of random variables $Y_n(\omega)$, $n = 1, \ldots,N$. This is rather common to do, for example when employing a spectral decomposition of the random field such as the Karhunen--Lo\`eve expansion \cite{Karhunen1947, Loeve1978} truncated after a finite number of terms. Crucially, the first term of this expansion is the expected value of the random field. Moreover, we assume the expected value to be spatially-homogeneous and unknown and denote it by $p \in \Rset$ (hence being one of the hyper-parameters of the random field model). Thus, the random field $q$ can be expressed as $q(\xx,\omega) = p + g(\xx,\omega)$, where $g$ is a random field with expected value equal to zero. Then the problem \eqref{eq:AC_random_intro} turns into a PDE that has the same structure as the classic Allen--Cahn equation \eqref{eq:AC_general} only with an additional linear term with random and spatially-heterogeneous coefficient $g(\xx,\omega)$. 
    %\textcolor{red}{The arising PDE has the same structure of the classic Allen--Cahn equation \eqref{eq:AC_general} only with an additional linear term with random and spatially-heterogeneous coefficient $g(\xx,\omega) = q(\xx,\omega) - p$.} 
    From this we conclude that the expected value $p$ of the random field $q$ can be interpreted as a bifurcation parameter for the Allen--Cahn equation with random coefficients \eqref{eq:AC_random_intro}. 
    
    The goal of this work is to contribute to the development of a theoretical framework and suitable numerical methodologies for the forward Uncertainty Quantification (UQ) analysis of nonlinear dynamics focusing on bifurcations.
    The key observation is that the finite-dimensional representation of the random field $q$ in terms of $N$ statistically independent random variables allows us to consider the Allen--Cahn equation with random coefficients \eqref{eq:AC_random_intro} on a finite-dimensional probability space: The set of elementary events is the Cartesian product of the image spaces of the random variables $Y_n$, and the probability measure is absolutely continuous with respect to the Lebesgue measure on $\mathbb{R}^N$, where the associated probability density function is the product of the probability density functions of the random variables $Y_n$, see e.g.\ \cite{babuska2010}. Crucially, the bifurcation analysis of such a problem can be reduced to analysing the deterministic instances corresponding to realizations of the random variables.
    This resembles the classical bifurcation analysis for the Allen--Cahn equation with deterministic coefficients and lays the foundation for the bifurcation analysis of the random case.
    Specifically, we prove that the Allen--Cahn equation with random coefficients exhibits supercritical pitchfork bifurcations, where the bifurcation points are random variables and the bifurcation branches are random curves. 
    Moreover, we show that the UQ analysis in the simpler case of a spatially-homogeneous random coefficient $q = q(\omega)$ can be carried out almost analytically, i.e.\ without introducing any UQ numerical methods which are instead required to handle the general case of spatially-heterogeneous coefficients. 
    
    Thus, we propose a methodology based on the Polynomial Chaos (PC) expansion, a classical method in UQ, to construct surrogate models to give a probabilistic description of bifurcation points and steady states. The PC method was first proposed in \cite{wiener1938} to approximate random functions depending on Gaussian random variables. 
    Its extension to non-Gaussian random variables was proposed in \cite{XiuKarniadakins2002:gPC} with the name of generalized PC (gPC), see \cite{XiuKarniadakins2002:gPC,ernst2012}. 
    Our choice of using the PC method is motivated by two main reasons. 
    First, the PC method has already been exploited e.g.~in \cite{sousedik2022} and very recently in \cite{gonnella2024stochastic} in the context of bifurcation analysis of Navier--Stokes equations with stochastic viscosity.
    In particular, in \cite{sousedik2022} the authors use the PC expansion as a surrogate to monitor the sign of the real part of the rightmost eigenvalue of the linearization operator to assess linear stability. 
    In other words, the PC expansion is employed as surrogate for the solution of a stochastic eigenvalue problem. Secondly, the PC method can be implemented in a fully non-intrusive way, i.e.\ relying on the solution of deterministic problems that are realizations of the original random one. Such realizations are prescribed by an appropriate stochastic collocation strategy, that we choose to base on sparse grids, see e.g.\ \cite{bungartz2004,babuska2010,piazzola2022:sparse.grids,xiu2005}. This allows to take advantage of the well-established numerical methods for deterministic dynamical systems employing them in an (almost) black-box manner. In particular, well-studied numerical continuation methods (see e.g.\ \cite{uecker2021}) are used for the approximation of the branches of equilibria. 
    %Furthermore, note that the finite-dimensional framework discussed above provides the parametric regularity that can be exploited to study the approximation properties of this approach; we leave this investigation to future studies.   
    The study of the parametric regularity of the Allen--Cahn PDE solution together with approximation properties of the PC approach are left for future investigations.
    
    The rest of the paper is organized as follows. In Section~\ref{sect:AC} we summarize the main elements of the theory of bifurcations for the classic Allen--Cahn equation with deterministic coefficients. 
    Whereas the technical setting for bifurcation problems is standard in the dynamical systems community, we believe that it is not well-established in the UQ field yet and therefore worth recalling it. This also allows us to fix notation and terminology. 
    Experts in the bifurcation theory of dynamical systems may skip Section~\ref{sect:AC} and go directly to Section~\ref{sect:AC_random}, where we introduce the Allen--Cahn equation with random coefficients and specify the finite-dimensional random field model. 
    Furthermore, in Section~\ref{sect:AC_random} we detail the bifurcation analysis and prove general statements on bifurcation points and bifurcation curves.     
    The UQ analysis is first discussed in Section \ref{sect:AC_hom_rand} for the case of spatially-homogeneous random coefficients. In this setting, we obtain some analytic results that ease substantially the UQ analysis. The case of spatially-heterogeneous coefficients is further discussed in Section \ref{sect:AC_het_rand}, where we introduce our proposed numerical method based on the PC expansion and numerical continuation. Finally, in Section \ref{sect:conclusion} we summarize and discuss our approach, and give an outlook on future work. The two appendices, Appendix \ref{appendix:num_cont} and \ref{appendix:sg}, recall briefly the basis of numerical continuation and sparse grids, the backbones of the proposed numerical methodology.

\section{The Allen-Cahn equation with deterministic coefficients} \label{sect:AC}

    We consider the Allen--Cahn equation on an open bounded domain $D \subset \Rset^d$, $d=1,2,3$ with smooth boundary $\partial D$, i.e.,\
    	\begin{equation}\label{eq:AC}
    		\begin{aligned}
    			\partial_t u & = \Delta u + pu - u^3, \quad \quad  & \xx \in D, \\
    			u & = 0,  \quad \quad & \xx \in \partial D, \\
                u \vert_{t=0} & = u_0,  \quad &  \xx \in \overline D,
    		\end{aligned}
    	\end{equation}
    where $u = u(t,\xx): \Rset_+ \times \overline D  \rightarrow \Rset$ and $ p \in \Rset $ is a deterministic parameter. 
    The problem is well-posed in $L^2(D;\Rset)$ %$$H^1_0(D;\Rset)$ 
    and defines a continuous-time semiflow $\varphi^t$ on the time domain $\mathcal{T}=[0,+\infty)$, with state space $\mathcal{X}=L^2(D;\Rset)$, %H^1_0(D;\Rset)$ 
    and the family of evolution operators is given by 
   	\[
	% \varphi^t: H^1_0(D;\Rset) \rightarrow H^1_0(D;\Rset), \quad \varphi^t(u_0) = u(t,\xx). 
    \varphi^t: L^2(D;\Rset) \rightarrow L^2(D;\Rset), \quad \varphi^t(u_0) = u(t,\xx). 
	\]
    % where $u(t,\xx)$ is the solution of \eqref{eq:AC}, see e.g.\ \cite{robinson2001}. 
    In fact, the negative sign in front of the cubic nonlinearity guarantees that~\eqref{eq:AC} is globally dissipative, so that solutions exist for all times~\cite{robinson2001,temam1997}. 
    For the understanding of the long-term dynamics it is crucial to study the so-called steady states (also known as equilibria), that is, solutions of \eqref{eq:AC} which are forward invariant and independent of time.
    In particular, we classify the dynamics of \eqref{eq:AC} depending on the parameter $p$. 
    In the following we summarize the main elements of the theory, thereby fixing the notation. 
    
    \subsection{Bifurcation analysis} \label{sect:bif_analysis_det}
    
    Equilibria $u = u(\xx)$ are solutions of the stationary problem associated to \eqref{eq:AC}, i.e.
	\begin{equation}\label{eq:AC_stationary}
		\begin{aligned}
			\Delta u + p u  - u^3 & = 0, \quad \quad  & \xx \in D, \\
			u & = 0,  \quad \quad & \xx \in \partial D. \\
		\end{aligned}
	\end{equation}
    More precisely, since in general different values of $p$ lead to different solutions of \eqref{eq:AC_stationary}, we call an \textit{equilibrium point} each couple $(p,u)  \in \Rset \times L^2(D;\Rset)$ that satisfies \eqref{eq:AC_stationary}. Evidently, the trivial function $u\equiv 0$ is a solution of \eqref{eq:AC_stationary} for any $p \in \Rset$ and the set 
	\[
	   \{ (p,0) \, \vert \, p\in \Rset\} \subset \Rset \times L^2(D;\Rset)
	\]
    is then called \textit{trivial equilibrium branch}. Note that we use the term \textit{branch} for a connected set of points. The existence of other equilibrium branches intersecting the trivial one can be proven by the Crandall--Rabinowitz theorem, see e.g.\ \cite{kielhofer2014,kuehn2019:book,uecker2021}. Let us introduce the operator 
	\begin{equation}\label{eq:F}
	   F: \Rset \times L^2(D;\Rset) \rightarrow L^2(D;\Rset), %L^2(D;\Rset), 
        \quad F(p,u): = \Delta u + p u  - u ^3,
	\end{equation}
    % with open sets $P \subset \Rset$, $U \subset L^2(D;\Rset)$ %$\subset L^2(D;\Rset)$ %H^1_0(D;\Rset)$ 
    %open sets, 
    such that problem \eqref{eq:AC_stationary} can be rewritten as 
	\begin{equation}\label{eq:equilibrium_problem}
		F(p,u) = 0. 
	\end{equation}
    Further, let $(\der_u F)(p,0)$ denote the Frech\' et derivative of $F$ with respect to $u$ at a point $(p,0)$ on the trivial equilibrium branch. The candidate bifurcation points $(p^*,0)$ are such that $(\der_u F)(p^*,0)$ is not bijective, i.e.\ points where the Implicit Function theorem cannot be applied. 
    Hence, one has to look for the zero eigenvalues of the following $p$-dependent operator
     \begin{equation}\label{eq:frechet_der}
	(\der_u F) (p,0): L^2(D;\Rset) \rightarrow L^2(D;\Rset), \quad  [(\der_u F) (p,0)] v = (\Delta + p) v
	\end{equation}
    with domain $H^2(D;\Rset) \cap H^1_0(D;\Rset)$. 
    It has a self-adjoint and compact resolvent (see e.g.\ \cite{kielhofer2014}), thus its spectrum is discrete and consists of a non-increasing sequence of real eigenvalues $\lambda^F_i = \lambda^F_i(p)$, $ i \in \Nset$, $\lambda^F_i \rightarrow -\infty$, $i\rightarrow +\infty$. 
    %\textcolor{red}{It is a compact and self-adjoint operator (see e.g.\ \cite{kielhofer2014}). 
    %Thus
    %\begin{equation}\label{eq:frechet_der}
	%(\der_u F) (p,0): H^2(D;\Rset) \cap H^1_0(D;\Rset) \rightarrow L^2(D;\Rset), \quad  %(\der_u F) (p,0) v = (\Delta + p) v, 
	%\end{equation}
    %which is self-adjoint. 
    %its spectrum is discrete and consists of a non-increasing sequence of real eigenvalues $\lambda^F_i = \lambda^F_i(p)$, $ i \in \Nset$, $\lambda^F_i \rightarrow -\infty$, $i\rightarrow +\infty$.}
    Note that $\lambda^F_i = \lambda_i + p$, $i \in \Nset$, where  $\lambda_i$ denotes an eigenvalue of the Laplacian on $D$ with homogeneous Dirichlet boundary conditions. Hence, we conclude that there exists a sequence $(p^\ast_i )_{i \in \Nset}$ of bifurcation points with $ p^*_i = -\lambda_i$. 
    
    % For simplicity, let us drop the subscript $i$ and denote a generic bifurcation point by $p^*$.    
    The crucial assumption of the Crandall--Rabinowitz theorem is     
	\begin{equation}\label{eq:hp_CR}
	   \text{dim } \mathcal{N} \left[ (\der_u F)(p^*_i,0) \right] = \text{codim } \mathcal{R} \left[ (\der_u F)(p^*_i,0) \right]  = 1,
	\end{equation}
    where $\mathcal{N} \left[ (\der_u F)(p^*_i,0) \right]$ and $\mathcal{R} \left[ (\der_u F)(p^*_i,0) \right]$ are the kernel and the range of $(\der_u F)(p^*_i,0)$, respectively. 
    \begin{remark}The Crandall--Rabinowitz theorem handles the case of bifurcations from simple eigenvalues since the kernel of $(\der_u F)(p^*_i,0)$ is the eigenspace relative to the eigenvalue $0$. 
    In 1D physical space the condition \eqref{eq:hp_CR} is always satisfied. 
    In contrast, in higher space dimensions we can handle the first bifurcation point only, i.e.\ the one that corresponds to the rightmost eigenvalue of the Laplacian.
    This is because the other eigenvalues are in general not simple as the multiplicity of the eigenvalues of the Laplacian relates to the shape of the domain $D$, see e.g.~\cite{henrot2006:eigenvalues,Grebenkov2013:eigenvalues}.
    \end{remark} \noindent
    Crucially, condition \eqref{eq:hp_CR} allows us to consider the following decompositions 
	\begin{align*}
	   H^2(D;\Rset) \cap H^1_0(D;\Rset) & = \mathcal{N} \left[ (\der_u F)(p^*_i,0) \right] \oplus X_0, \\ \nonumber
	   L^2(D;\Rset) &  = \mathcal{R} \left[ (\der_u F)(p^*_i,0) \right] \oplus Z_0,
	\end{align*}
    with $X_0$ and $Z_0$ denoting the complements of $\mathcal{N} \left[ (\der_u F)(p^*_i,0) \right]$ and $\mathcal{R} \left[ (\der_u F)(p^*_i,0) \right]$, respectively, and $\text{dim } Z_0 = 1$.
    Then, by means of the so-called Lyapunov--Schmidt reduction method (see e.g.\ \cite{kielhofer2014}), we construct the following finite-dimensional projection of the infinite-dimensional problem \eqref{eq:equilibrium_problem},  % is (locally) equivalent to a one-dimensional problem 
	\begin{equation} \label{eq:finite_dim_bif}
	   \Phi(p,w) = 0, \quad \text{with } \, \Phi: \widetilde{P}  \times \widetilde{U} \rightarrow Z_0 \, \text{ and } \, \Phi(p^*_i,0) = 0,
	\end{equation}
    where $\widetilde{P}  \times \widetilde{U} \subset \Rset \times \mathcal{N} \left[ (\der_u F)(p^*_i,0) \right]$ denotes a neighborhood of $(p^*_i,0)$. 
    With the additional assumption that $F \in C^3(\Rset \times  L^2(D;\Rset); L^2(D;\Rset))$ and the so-called transversality condition
    \[
	\der_{u p} \, F(p^*_i,0) \, v^*_i \notin \mathcal{R} \left[ (\der_u F)(p^*_i,0) \right], 
	\]
    where $\mathcal{N} \left[ (\der_u F)(p^*_i,0) \right] = \text{span} \{v^*_i\}$, $\lVert v^*_i \rVert_{H^2} = 1$, we can apply the Implicit Function theorem to \eqref{eq:finite_dim_bif}. 
    The conclusion is that the nontrivial equilibrium branch through the point $(p^*_i,0)$, for which \eqref{eq:hp_CR} holds, can be described by a $C^1$ curve in $\Rset \times L^2(D;\Rset)$, %H^1_0(D;\Rset)$, 
	\begin{equation}\label{eq:branch}
		\{ \left(p(s),  u(s) \right) \, \lvert \, s \in (-\delta, \delta ), \ (p(0),u(0)) = (p^*_i,0)\}
	\end{equation}
    such that 
    \begin{equation}\label{eq:branch_formula}
    \left(p(s),  u(s) \right) = \left( \varphi(s), s v^*_i + \mathcal{O}(s^2) \right)
    \end{equation}
    as $s \rightarrow 0$, where $\varphi: (-\delta,\delta) \rightarrow \widetilde{P}$ is a $C^1$ function, $s \in \Rset$, and $\delta >0$ is a suitable real-valued constant. 
    We refer to the proof of the Crandall--Rabinowitz theorem in e.g.~\cite{kielhofer2014} for the derivation of this result. Further, the local shape of a nontrivial branch is determined by the derivatives with respect to $s$ of the nontrivial branch evaluated at $s=0$ (i.e.\ at the bifurcation point). The first derivative is 
	\begin{equation}\label{eq:tang_vect_branch}
	(\dot{p}(0), \dot{u}(0)) := \frac{\mathrm{d}}{\mathrm{d}s} \left(p(s),u(s)\right)\vert_{s=0} =  (0,v^*_i)
	\end{equation}
    with $\dot{p}(0)=0$ following from $(D_{uu} F) (p^*_i,0)[v^*_i,v^*_i] \in \mathcal{R}\left[ (\der_u F)(p^*_i,0) \right]$ (see \cite{kielhofer2014}). Instead
    $\dot{u}(0)=v^*_i$ follows from \eqref{eq:branch_formula}. 
    Thus, the branch of equilibria bifurcates from the trivial equilibrium in the direction of the kernel of $(\der_u F)(p^*_i,0)$. A more detailed shape of the bifurcation branch locally near $(p^*_i,0)$ can be determined via the second-order derivative
    %; this requires the third-order derivative of $\Phi$ 
    which involves the third-order derivative of $F$. Clearly, the relevant term is the cubic one and its sign (in this case negative) allows us to conclude that the bifurcation is a supercritical pitchfork, see \cite{kielhofer2014,uecker2021} for more details.  
    Note that pitchfork bifurcations are expected due to the symmetry $u\mapsto -u$ of the Allen--Cahn equation, that is, for each solution $u$ the function $-u$ is also a solution.  

    Note that the Crandall--Rabinowitz theorem provides only a local existence result. 
    However, under suitable regularity assumptions bifurcation branches can be extended globally, i.e., either a branch extends to be unbounded or it intersects other branches~\cite{kielhofer2014,uecker2021}.     
    Due to the just mentioned symmetry, 
    %For example, \eqref{eq:AC_stationary} has a symmetry implying that $u$ and $-u$ are solutions. Therefore 
    we can split each nontrivial branch emanating at a pitchfork bifurcation point into two parts and consider the two symmetric branches $\gamma^+_i \subset \Rset \times L^2(D;\Rset)$ and $\gamma^-_i \subset \Rset \times L^2(D;\Rset)$
    %$\gamma^+_i \subset \Rset \times H^1_0(D;\Rset_+)$ and $\gamma^-_i \subset \Rset \times H^1_0(D;\Rset_-)$ through the same point $(p^*_i,0)$ defined as follows
	\begin{align} \label{eq:symmetric_branch}
		\begin{split}
			\gamma^+_i & = \left\{ \left(p(s),  u(s) \right) \, \lvert \, s \in [0,s_c^+], \ (p(0),u(0)) = (p^*_i,0) \right\}, \\ 
			\gamma^-_i & = \left\{ \left(p(s),  u(s) \right) \, \lvert \, s \in {[0,s_c^-]}, \ (p(0),u(0)) = (p^*_i,0) \right\},
		\end{split}
	\end{align}
    where $s_c^+,s_c^- \in \Rset$ are, respectively, the right and left endpoints of the curve parametrization interval. In other words, since, in general, each bifurcation curve intersects other branches, we define a bifurcation branch as a curve connecting two intersection points (i.e.\ bifurcation points) arising at $s=0$ and $s=s_c^+$ (and similarly for $s_c^-$). 
    It is extremely crucial to note that for general nonlinear differential equations, as well for \eqref{eq:AC} in $d\geq2$, it is usually not possible to track the global shape of the a bifurcation branch, or even the spatial shape of the steady states by analytical pen-and-paper methods.  
    An exception is the Allen--Cahn equation in 1D space ($d=1$), where the bifurcation branches $\gamma_i^+$ and $\gamma_i^-$ can be globally extended such that $s_c^+ = +\infty$ and $s_c^- = + \infty$. This follows from a result by Chafee and Infante \cite{chafee1974} (see also \cite{henry1981,robinson2001}) showing that the only nontrivial equilibria are the ones bifurcating from the trivial one in correspondence of the eigenvalues of the Laplacian.
    Hence no additional bifurcation points arise along the nontrivial branches. 
      
    To complete the qualitative analysis of the Allen--Cahn equation \eqref{eq:AC} we now consider the stability of the equilibria. 
    A generic equilibrium point $(p^*,u^*)$ is called \textit{linearly stable} if the spectrum of $(\der_u F) (p^*,u^*)$ is properly contained in the left half of the complex plane. 
    Following up on the previous discussion, we conclude that the trivial equilibrium branch is stable for values of $p$ smaller than the first bifurcation point $p^*_1$, i.e.\ $p < p_1^*$. For $p>p^*_1$ the spectrum contains at least one positive eigenvalue and hence the equilibrium is unstable. Moreover, one can also pass from linear stability analysis to local nonlinear stability in our context by standard methods~\cite{DaleckiiKrein}. So one concludes that (nonlinear) local asymptotic stability holds for the trivial branch before the first bifurcation point. In addition, by the so-called \textit{principle of exchange of stability} (see e.g.\ \cite{kielhofer2014}) the first nontrivial equilibrium branch is locally asymptotically stable for a supercritical pitchfork bifurcation. Indeed, the stability of the trivial equilibrium branch for $p<p_1^*$ is ``transferred'' to the nontrivial one at the value $p = p_1^*$. On the other hand, the other nontrivial equilibrium branches arising at $p = p_i^*$, $i\geq2$ are unstable, as they bifurcate from a locally fully unstable branch near $p = p_i^*$, $i\geq2$. 

    Finally, we discuss the \textit{bifurcation diagram}, a classic tool to visually summarize the qualitative behavior of a dynamical system.
    In a bifurcation diagram we display equilibria and their stability against the bifurcation parameter $p$.
    Since we work in an infinite-dimensional state space we introduce scalar-valued observables associated to the $u$-component of a branch \eqref{eq:symmetric_branch}.
    This allows us to show 2D plots.
    Some examples are the $L^2(D)$-norm of $u$, or the value of $u$ at a point of interest of the domain $D$. 
    % In UQ these observables are often called \textit{quantities of interest (QoI)}.
    In Figure \ref{fig:AC_bif_diag} we display the $L^2(D)$-bifurcation diagram of the Allen--Cahn equation \eqref{eq:AC} on the one-dimensional physical domain $D=[0,\pi]$. 
    Note that in this case all eigenvalues $\lambda^F_i$ of $(\der_u F)(p,0)$ are simple.
    Thus nontrivial supercritical pitchfork bifurcation branches arise in correspondence of each of them. Stable parts of a branch are drawn with a solid line; dashed lines depict unstable parts of a branch. Note that the bifurcation diagram has been obtained with the numerical continuation software Continuation Core and Toolboxes (COCO) \cite{dankowicz2013} (version October 26, 2023) available at \cite{coco:sourceforge} after employing a finite difference discretization of the differential operator (the set-up is the same as in Section~\ref{sect:UQ_hom_rand}).

    \begin{figure}
		\centering
			\includegraphics[width=0.35\textwidth]{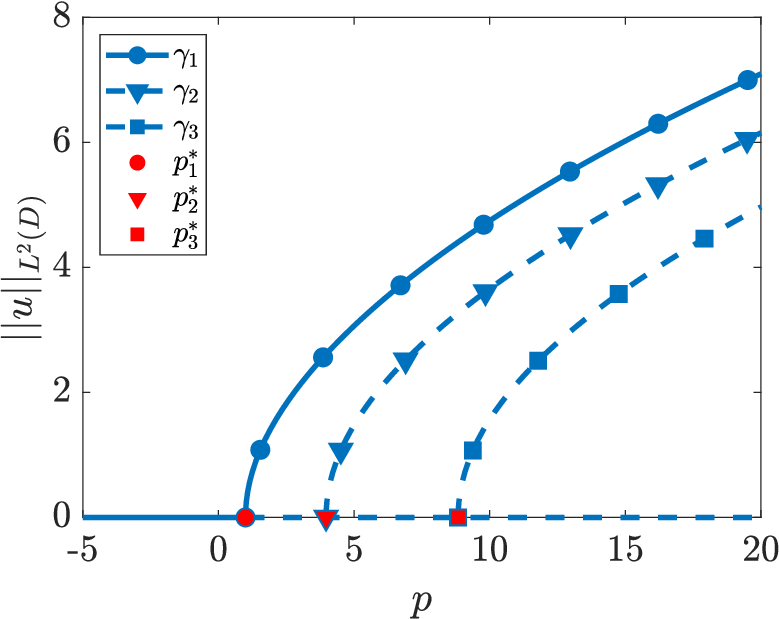}
		\caption{Bifurcation diagram of the Allen--Cahn equation \eqref{eq:AC} on $D=[0,\pi]$: $(p_i^*,0)$, $i=1,2,3$ are the first three bifurcation points and $\gamma_i$ the corresponding nontrivial branches.}
		\label{fig:AC_bif_diag}
	\end{figure}

\section{The Allen--Cahn equation with random coefficients}\label{sect:AC_random}

    Let $\left( \Omega, \mcA,  \Pprob \right)$ be a probability space, where $\Omega$ is the set of elementary events, $\mcA$ a $\sigma$-algebra, and $\Pprob\colon \mcA \rightarrow [0,1]$ a probability measure. Let $D \subset \Rset^d$, $d=1,2,3$ be an open bounded domain with smooth boundary $\partial D$. We consider the following semilinear PDE with random coefficients, 
    \begin{equation}\label{eq:AC_random_general}
		\begin{aligned}
			\partial_t u & = \Delta u + q(\xx,\omega) u - u^3, \quad \quad && \xx \in D, \\
			u & = 0, \quad \quad  && \xx \in \partial D,  \\
            u \vert_{t=0} & = u_0,  \quad &&  \xx \in \overline D, \,\mathbb{P}\text{-almost surely},
		\end{aligned}
	\end{equation}
    where $q: \overline D \times \Omega \rightarrow \Rset$ is a random field (see e.g.\ \cite{lord2014}).  
    \begin{hp}\label{hp:rf} 
        We assume the random field $q(\xx,\omega)$ to be of the following form
        \begin{equation}\label{eq:rf_model}
        q(\xx,\omega) = p + g(\xx,\omega),   \quad p \in \Rset,
        \end{equation}
        where $g: \overline D \times \Omega \rightarrow \Rset $ and the following holds 
        \begin{enumerate} [ref=\ref{hp:rf} (\arabic*)]
        \item \label{assump-A}%
            $g$ is uniformly bounded, i.e.,
            \[
            \exists \ \bar g \in \Rset \, \text{ such that } \, \mathbb{P}( \omega \in \Omega: \lvert g(\xx,\omega) \rvert \leq \bar g \, \, \forall \xx \in \overline{D}) = 1;
            \]
        \item \label{assump-B}%
            $g$ has zero mean, i.e., $\mathbb{E}[g(\xx,\omega)] = 0$, $\forall \xx \in \overline D$.
        \end{enumerate}
    \end{hp} \noindent

    \begin{prop}
        Under Assumption \ref{assump-A}, the problem \eqref{eq:AC_random_general} defines $\mathbb{P}$-almost surely pathwise a dynamical system 
        $\{\mathcal{T},\mathcal{X},\varphi^t_{\omega}\}$ with time interval $\mathcal{T}=[0,+\infty)$, state space $\mathcal{X}=L^2(D;\Rset)$ %H^1_0(D;\Rset)$ 
        and family of evolution operators 
       	\[
    	% \varphi^t: H^1_0(D;\Rset) \rightarrow H^1_0(D;\Rset), \quad \varphi^t(u_0) = u(t,\xx). 
        \varphi^t_{\omega}: L^2(D;\Rset) \rightarrow L^2(D;\Rset), \quad \varphi^t_{\omega}(u_0) = u(t,\xx,\omega) \quad \text{for } \omega \in  \Omega \text{ fixed}. 
    	\]
    \end{prop}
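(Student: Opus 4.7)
The strategy is to reduce the proposition to a pathwise statement about a deterministic parabolic problem and then invoke the standard well-posedness theory for reaction–diffusion equations with bounded reaction coefficients. First, I would single out the full-probability event
\[
\Omega_0 := \bigl\{ \omega \in \Omega : |g(\xx,\omega)| \le \bar g \text{ for all } \xx \in \overline{D} \bigr\},
\]
which by Assumption \ref{assump-A} satisfies $\Pprob(\Omega_0)=1$. For each $\omega \in \Omega_0$ fixed, the coefficient $q(\cdot,\omega) = p + g(\cdot,\omega)$ is an element of $L^\infty(D;\Rset)$ with $\lVert q(\cdot,\omega)\rVert_{L^\infty} \le |p| + \bar g =: M_\omega < \infty$, and \eqref{eq:AC_random_general} becomes the deterministic semilinear problem
\[
\partial_t u = \Delta u + q(\xx,\omega)\, u - u^3, \qquad u|_{\partial D} = 0, \qquad u|_{t=0}=u_0.
\]

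Next, I would verify well-posedness of this pathwise problem in $\mcX = L^2(D;\Rset)$ following the same lines as for \eqref{eq:AC} in Section~\ref{sect:AC}. The Dirichlet Laplacian $\Delta$ generates an analytic semigroup on $L^2(D;\Rset)$; the multiplication operator by the bounded function $q(\cdot,\omega)$ is a bounded perturbation and hence the perturbed operator still generates an analytic semigroup. The nonlinearity $u \mapsto -u^3$ is locally Lipschitz on appropriate interpolation spaces between $L^2(D;\Rset)$ and the domain of the Laplacian (via Sobolev embedding in $d \le 3$), which yields local-in-time existence and uniqueness of a mild solution, for instance by Henry's theorem~\cite{henry1981,robinson2001}. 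For global existence I would run the standard energy estimate: testing the equation with $u$ gives
\[
\tfrac{1}{2}\tfrac{\mathrm{d}}{\mathrm{d}t}\lVert u\rVert_{L^2}^2
+ \lVert \nabla u\rVert_{L^2}^2
+ \lVert u\rVert_{L^4}^4
= \int_D q(\xx,\omega)\, u^2 \,\mathrm{d}\xx
\le M_\omega \lVert u\rVert_{L^2}^2,
\]
and the cubic dissipation $-\lVert u\rVert_{L^4}^4$ dominates the quadratic term $M_\omega \lVert u\rVert_{L^2}^2$ (via Young's inequality applied on $D$ of finite measure), producing an absorbing-ball estimate that precludes blow-up. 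This is precisely the dissipativity argument mentioned around~\eqref{eq:AC}, now applied pathwise with the $\omega$-dependent constant $M_\omega$ in place of $|p|$.

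With global well-posedness in hand, I would define $\varphi^t_\omega(u_0) := u(t,\cdot,\omega)$ and check the dynamical-system axioms pathwise on $\Omega_0$: the identity property $\varphi^0_\omega = \mathrm{Id}_{L^2(D;\Rset)}$ is immediate from the initial condition, the semigroup property $\varphi^{t+s}_\omega = \varphi^t_\omega \circ \varphi^s_\omega$ follows from uniqueness of solutions applied with $\varphi^s_\omega(u_0)$ as initial datum at time $s$, and continuity of $t \mapsto \varphi^t_\omega(u_0)$ in $L^2(D;\Rset)$ is part of the mild-solution framework. Since $\omega$ was an arbitrary element of the full-probability set $\Omega_0$, this establishes the $\Pprob$-almost sure existence of the claimed pathwise dynamical system $\{\mcT,\mcX,\varphi^t_\omega\}$.

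The statement is really a bookkeeping extension of the deterministic result of Section~\ref{sect:AC}, so I do not expect any genuine obstacle; the only point requiring care is that all constants in the dissipativity estimate become $\omega$-dependent, which is harmless for a pathwise statement precisely because Assumption \ref{assump-A} provides the uniform-in-$\xx$ bound $\bar g$ needed to make $M_\omega$ finite on $\Omega_0$. If one wanted to upgrade this to a random dynamical system in the sense of Arnold (joint measurability and cocycle property over a metric dynamical system on $\Omega$), additional measurability arguments on the mild-solution map would be needed, but that is beyond what the proposition asks.
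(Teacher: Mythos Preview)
Your argument is correct and complete for what the proposition claims. The route, however, differs from the paper's. The paper does not set up the semigroup framework or run the $L^2$ energy estimate explicitly; instead it treats the reaction term pathwise as the scalar function $f(r) = (p+g(\xx,\omega))\,r - r^3$ and verifies the pointwise structural hypotheses of \cite[Thm.~8.4]{robinson2001}, namely $f\in C^1$, the two-sided bound $-k-\alpha_1 r^4 \le f(r)r \le k-\alpha_2 r^4$, and $f'(r)\le l$. The first and third follow immediately from the bound $|g|\le \bar g$, and the two-sided bound is obtained via Young's inequality on $|(p+g)r^2|$. Robinson's theorem then delivers global existence, uniqueness, and continuous dependence in one stroke. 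Your approach is more self-contained (analytic semigroup plus bounded perturbation, local existence via Henry, global via the energy/dissipativity estimate), which has the advantage of making the mechanism transparent and of not depending on the precise packaging of the cited theorem; the paper's approach is shorter because it outsources all of this to a single reference. Substantively the two arguments coincide: both hinge on the cubic term dominating the linear one once $|g|\le\bar g$ is available, and both are pathwise on the full-probability set you call $\Omega_0$.
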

    \begin{proof}
    This follows from \cite[Thm 8.4]{robinson2001} that states global (in time) existence and uniqueness of a solution and its continuous dependence on the initial condition for systems of the type of \eqref{eq:AC_random_general} provided that the polynomial nonlinearity $f(r) = (p+g) r - r^3$ is a $C^1$ function such that 
    \begin{equation}\label{eq:cond_well_posed_1}
    \exists k, \alpha_1,\alpha_2 >0: \quad -k -\alpha_1 r^4 \leq f(r) r \leq k -\alpha_2 r^4  \quad \text{for all } r \in \Rset
    \end{equation}
    and 
    \begin{equation}\label{eq:cond_well_posed_2}
    f'(r) \leq l \quad \text{for all } r \in \Rset.
    \end{equation}
    It clearly holds true that $ f\in C^1$ and \eqref{eq:cond_well_posed_2} follows from the boundedness of $g$ in Assumption \ref{assump-A}. Moreover, applying Young's inequality we obtain  
    \[
    \lvert (p+g)  r^2 \rvert \leq \frac{1}{2}(\lvert p\rvert+\bar g) \Big( \lambda + \frac{r^4}{\lambda} \Big), \quad \lambda >0
    \]
    and it follows that 
    \begin{align*}
    f(r)r & \geq -\frac{1}{2}(\lvert p\rvert+\bar g) \lambda - \frac{1}{2} \left( \frac{\lvert p\rvert+\bar g}{\lambda} + 1 \right) r^4, \\
    f(r)r & \leq \frac{1}{2}(\lvert p\rvert+\bar g) \lambda + \frac{1}{2} \left(  \frac{\lvert p\rvert+\bar g}{\lambda}  -1 \right) r^4.    
    \end{align*}
    Hence, \eqref{eq:cond_well_posed_1} holds true, with $k=\frac{1}{2}(\lvert p\rvert+\bar g) \lambda$, $\alpha_1 = \frac{1}{2} \left(\frac{\lvert p\rvert+\bar g}{\lambda} +1 \right)$, and $\alpha_2 = -  \frac{1}{2} \left(\frac{\lvert p\rvert+\bar g}{\lambda} - 1 \right)$. Note that $k>0$ and $\alpha_1 <0$ for any $\lambda>0$, and $\alpha_2<0$ provided $\lambda>0$ suitably chosen.
    \end{proof}
    From \eqref{eq:rf_model} and Assumption \ref{assump-B} it follows that the random coefficient $q$ in \eqref{eq:AC_random_general} is decomposed into the sum of a zero-mean random field $g$ and a deterministic parameter $p$ that we consider to be unknown. We thus expect that \eqref{eq:AC_random_general} exhibits bifurcations associated with variations of $p \in \Rset$ as in the deterministic case \eqref{eq:AC}. This aspect is investigated in full detail in Section \ref{sect:bif_analysis_het_rand}. 
    Note that the unknown parameter $p$ is actually the expected value of the original random field $q$. Hence, in this setting  bifurcations are triggered by the expected value of the random coefficient $q$. In general, there might be other unknowns in a random field model (usually called hyper-parameters in the UQ community) leading to bifurcations. This is an interesting perspective on possible extensions of our work to other random field models.
   
    Furthermore, we assume the following.
    \begin{hp}\label{hp:finite_dim_noise}
        We assume that the random field $g$ in \eqref{eq:rf_model} is parameterized by a finite number $N$ of independent 
        random variables $Y_n(\omega)$, $n=1,\ldots,N$. We define the random vector $\YY(\omega)= [Y_1(\omega), \ldots, Y_N(\omega)]$  such that 
        \[
        \YY: \Omega \rightarrow \Gamma, \quad \omega \mapsto \yy = [y_1, \ldots, y_N], 
        \]
        where $\Gamma = \Gamma_1 \times \ldots \times \Gamma_N \subset \Rset^N$.
    \end{hp}\noindent
    Due to the independence of the $Y_n$, the probability density function (pdf) associated to the random vector $\YY$ is % given by 
	\[
	\rho: \Gamma \rightarrow [0,+\infty), \quad \rho(\yy) = \prod_{n=1}^{N} \rho_n(y_n),
	\]
    where $\rho_n$ is the pdf of $Y_n$. 
	We then replace the probability space $\left(\Omega, \mcA,  \Pprob \right)$ by $\left(\Gamma, \mathcal{B}(\Gamma), \mathbb{P}_{\YY}\right)$, where $\mathcal{B}(\Gamma)$ denotes the Borel $\sigma$-algebra on $\Gamma$ and $\mathbb{P}_{\YY}$ the distribution of $\YY$ with
    \[
    \mathbb{P}_{\YY}(A) = \int_A \rho(\yy)\mathrm{d}\yy, \quad A \in \mathcal{B}(\Gamma).
    \]  
   Crucially, the infinite-dimensional stochastic problem \eqref{eq:AC_random_general} is now posed on a finite-dimensional probability space $\left(\Gamma, \mathcal{B}(\Gamma), \mathbb{P}_{\YY}\right)$, with set of elementary events $\Gamma \subset \mathbb{R}^N$. With a slight abuse of notation from now on we understand the random field $g$ in \eqref{eq:rf_model} and the PDE solution $u$ as mappings $g \colon \overline D \times \Gamma \rightarrow \mathbb{R}$ and $u\colon \overline D \times \Gamma \rightarrow \mathbb{R}$, respectively. 
    %\textcolor{red}{Crucially, this assumption turns the stochastic problem \eqref{eq:AC_random_general} into a deterministic one with $N$ parameters $\yy \in \Gamma$.} 
    Taking into account this (and the general model assumption in Assumption \ref{hp:rf}), we obtain the following parametric boundary value problem
    \begin{equation}\label{eq:AC_random}
	   \begin{aligned}
		\Delta u + pu + g(\xx,\yy) u - u^3 & = 0, &&\quad \quad   \xx \in D, \\
		u & = 0,  &&\quad \quad  \xx \in \partial D, \, \mathbb{P}_\YY-\text{a.s. }\yy \in \Gamma.
		\end{aligned}
	\end{equation}
	% and, for convenience from now on we consider the solution $u$ as a function $u: \Gamma \rightarrow \Rset$. 
    % \Cadd{where with a slight abuse of notation we use the same letter to denote the random field in \eqref{eq:rf_model} defined over the abstract probability space and the one defined over the image space $\Gamma$. }
    Clearly, the equation resembles the stationary problem associated to the classic Allen--Cahn equation with deterministic coefficients \eqref{eq:AC_stationary} with an additional linear term involving $g$. However, this term does not substantially alter the bifurcation diagram as we will discuss in detail in the following section. Indeed, the trivial equilibrium $u \equiv 0$ is still present and we can follow the steps presented in Section \ref{sect:AC} to study the bifurcations from such trivial equilibrium.

    % We also introduce the Hilbert space $L^2_{\rho}(\Gamma;\Rset)$ of square-integrable functions on $\Gamma$ with respect to the density $\rho(\yy)$, equipped with the associated inner product $\langle \cdot,\cdot\rangle _{L^2_{\rho}}$ and norm $\lVert \cdot \rVert_{L^2_{\rho}}$. 
    
\subsection{Bifurcation analysis}\label{sect:bif_analysis_het_rand}
    Let us introduce the following operator for $\yy \in \Gamma$ fixed, 
    \begin{equation}\label{eq:F_het_rand}
    \widehat F_{\yy} \colon \Rset \times L^2(D;\Rset) \rightarrow L^2(D;\mathbb{R}), \quad \widehat F_{\yy}(p,u)= \Delta u + pu + g(\xx,\yy) u - u^3, %, \quad \text{for } \yy \in \Gamma \text{ fixed}
    \end{equation}
    such that \eqref{eq:AC_random} can be written as $\widehat F_{\yy}(p,u) = 0$ $\mathbb{P}_\YY-\text{a.s. }\yy \in \Gamma$. As mentioned above, the trivial equilibrium occurs also in this case. 
    To study the bifurcations from such equilibrium, we introduce the Frech\'et differential operator of $\widehat F_{\yy}$ evaluated at the trivial equilibrium, that is,  
    \begin{equation} \label{eq:derF_het_rand}
    (\der_u \widehat F_{\yy}) (p,0) \colon   L^2(D;\Rset) \rightarrow L^2(D;\Rset), \quad [(\der_u \widehat F_{\yy}) (p,0)] v = \left(\Delta + p + g(\xx,\yy) \right) v.
    \end{equation}
    The operator $\widehat F_{\yy}$ is again an elliptic operator and we consider it with domain $H^2(D;\Rset) \cap H^1_0(D;\Rset)$, see \cite{kielhofer2014}. It has a discrete spectrum with eigenvalues of finite multiplicity with the rightmost one being simple \cite{henrot2006:eigenvalues}. Then, we assume the following.
    \begin{hp} \label{hp:rf_y} 
    We assume that the coefficient function $g \colon \overline D \times \Gamma \rightarrow \mathbb{R}$ is analytic in its second argument $\yy \in \Gamma$, where $\Gamma \subset \mathbb{R}^N$ is bounded.  
    \end{hp}
    The following holds for the bifurcation points.  
    \begin{prop} \label{prop:bif_rv_het_rand}
        Under Assumption \ref{hp:rf}, \ref{hp:finite_dim_noise}, and \ref{hp:rf_y}, the bifurcation points on the trivial equilibrium of \eqref{eq:AC_random} form a sequence 
        \[
        (p^*_i(\yy),0) \in \Rset \times L^2(D;\Rset) \quad \text{with} \quad p^*_i(\yy) = - \lambda^{g}_i(\yy), \quad i \in \Nset, \, \yy \in \Gamma,
        \] 
        where $\lambda^{g}_i$ is the $i$th-eigenvalue of the operator $\Delta + g(\xx,\yy) \cdot \text{id}$. 
        Moreover, the bifurcation values $p^*_i(\yy)$ arising in correspondence of a simple eigenvalue are realizations of an associated random variable $p_i^*(\YY)$. 
    \end{prop}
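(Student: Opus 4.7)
The plan is to argue fibrewise: for each fixed $\yy \in \Gamma$ the operator $\widehat F_{\yy}$ in \eqref{eq:F_het_rand} has exactly the same structural properties as the deterministic operator $F$ in \eqref{eq:F}, only with the Laplacian replaced by the self-adjoint elliptic operator $\Delta + g(\xx,\yy)\cdot\mathrm{id}$. I would first repeat the deterministic argument of Section~\ref{sect:bif_analysis_det} pointwise in $\yy$ to identify the candidate bifurcation values, and then separately argue the random-variable claim via measurability of the eigenvalues.

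First, under Assumption~\ref{assump-A} the operator $(\der_u \widehat F_{\yy})(p,0)$ in \eqref{eq:derF_het_rand}, defined on $H^2(D;\Rset)\cap H^1_0(D;\Rset)$, is a bounded multiplicative perturbation of the Laplacian and hence self-adjoint with compact resolvent. Its spectrum consists of a non-increasing sequence of real eigenvalues $\lambda^g_i(\yy)+p$ tending to $-\infty$. Failure of bijectivity occurs precisely when one of these eigenvalues vanishes, which gives the candidate bifurcation value $p_i^*(\yy)=-\lambda^g_i(\yy)$. When $\lambda^g_i(\yy)$ is simple, self-adjointness forces the range of $(\der_u \widehat F_{\yy})(p_i^*(\yy),0)$ to equal the orthogonal complement of its one-dimensional kernel, so the Crandall--Rabinowitz condition \eqref{eq:hp_CR} is satisfied; the transversality condition is immediate because $\der_{u p}\widehat F_{\yy}(p_i^*(\yy),0)=\mathrm{id}$ maps a kernel generator $v_i^*(\yy)$ to itself, which does not lie in the range. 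Applying the Crandall--Rabinowitz theorem, as in Section~\ref{sect:bif_analysis_det}, then yields a genuine bifurcation point at $(p_i^*(\yy),0)$.

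For the second claim I need $\yy \mapsto p_i^*(\yy)=-\lambda^g_i(\yy)$ to be Borel measurable, so that $p_i^*(\YY)$ is an honest random variable. The min--max characterisation of eigenvalues of self-adjoint operators with compact resolvent gives the Lipschitz estimate
\[
    \bigl|\lambda^g_i(\yy)-\lambda^g_i(\yy')\bigr| \leq \|g(\cdot,\yy)-g(\cdot,\yy')\|_{L^\infty(D)},
\]
and analyticity of $g$ in its second argument (Assumption~\ref{hp:rf_y}) combined with the uniform bound in Assumption~\ref{assump-A} gives continuity of $\yy \mapsto g(\cdot,\yy)$ in $L^\infty(D)$. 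Composing yields continuity, and hence Borel measurability, of $\yy \mapsto \lambda^g_i(\yy)$ on $\Gamma$; composing further with the random vector $\YY$ produces the random variable $p_i^*(\YY)$. The simplicity assumption is used only to guarantee that $p_i^*(\yy)$ picks out a single eigenvalue unambiguously and that Crandall--Rabinowitz applies.

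The main obstacle is the book-keeping for indexing eigenvalues globally on $\Gamma$: as $\yy$ varies the eigenvalues of $\Delta+g(\xx,\yy)\cdot\mathrm{id}$ can cross, so the ``$i$-th eigenvalue'' is only canonically defined relative to the non-increasing ordering. With that ordering the min--max argument still yields global continuity, which is all the statement requires; if, however, one wants local analytic dependence of $p_i^*$ on $\yy$ (useful for later distributional analysis of $p_i^*(\YY)$) one must restrict to the open subset of $\Gamma$ on which $\lambda^g_i(\yy)$ is simple and invoke analytic perturbation theory of Kato type, exploiting that $g(\xx,\cdot)$ is real-analytic on the bounded parameter set $\Gamma$. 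Everything else is a direct transposition of the deterministic analysis of Section~\ref{sect:bif_analysis_det}.
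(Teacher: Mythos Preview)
Your argument is correct and, for the first claim, follows exactly the paper's line: the linearisation \eqref{eq:derF_het_rand} is a $p$-shift of $\Delta+g(\xx,\yy)\cdot\mathrm{id}$, so the candidate bifurcation values are $p_i^*(\yy)=-\lambda_i^g(\yy)$, and the Crandall--Rabinowitz verification you supply is precisely what the paper sketches immediately \emph{after} the proposition.

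Where you diverge is in the measurability argument. The paper does not use min--max; it simply invokes \cite[Thm.~3.2]{Chernov:2024}, which states that under Assumptions~\ref{hp:rf}--\ref{hp:rf_y} simple eigenvalues of $\Delta+g(\xx,\yy)\cdot\mathrm{id}$ are \emph{analytic} in $\yy$, and reads off $\mathcal{B}(\Gamma)$-measurability from that. Your route via the Courant--Fischer characterisation and the Lipschitz bound $|\lambda_i^g(\yy)-\lambda_i^g(\yy')|\le\|g(\cdot,\yy)-g(\cdot,\yy')\|_{L^\infty(D)}$ is more elementary and self-contained, and it yields continuity of \emph{every} ordered eigenvalue, not only the simple ones. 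What the paper's citation buys in exchange is the stronger analyticity conclusion, which is exactly the regularity one later needs to justify fast convergence of the gPC surrogates in Section~\ref{sect:AC_het_rand}. One small point to tighten: deducing $L^\infty(D)$-continuity of $\yy\mapsto g(\cdot,\yy)$ from pointwise-in-$\xx$ analyticity plus the uniform bound of Assumption~\ref{assump-A} tacitly uses that the radius of analyticity is uniform in $\xx$ (so that Cauchy estimates control $\partial_{\yy}g$ uniformly); this is implicit in the random-field models under consideration but is worth stating.
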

    \begin{proof} 
        {In analogy with the classic case \eqref{eq:AC_stationary}, the eigenvalues of $(\der_u \widehat F_{\yy}) (p,0)$ are $p$-shifts of the eigenvalues $\lambda^{g}_i$ and hence $p^*_i(\yy) = -\lambda^{g}_i(\yy)$. Clearly, the eigenvalues depend on the parameter $\yy$ and so do the bifurcation points. Moreover, in \cite[Thm.~3.2]{Chernov:2024} it is shown that simple eigenvalues of this type of operators are analytic in $\yy$, provided Assumption \ref{assump-A},\ref{hp:finite_dim_noise}, and \ref{hp:rf_y} hold}. From this we conclude that they are $\mathcal{B}(\Gamma)$-measurable and thus the corresponding bifurcation values are realizations of suitably defined random variables. 
    \end{proof}

    The application of the Crandall--Rabinowitz theorem recalled in Section \ref{sect:AC} is then straightforward, ensuring existence of local nontrivial branches of equilibria bifurcating from the trivial one in correspondence of simple eigenvalues of the operator \eqref{eq:derF_het_rand} (remember that at least the rightmost eigenvalue is always simple).  
    Further, note that since the eigenfunctions are $\yy$-dependent, the bifurcation direction (cf.\ \eqref{eq:tang_vect_branch}) of each realization of a bifurcation curve is in general different for different values of $\yy$. However, the bifurcations are still of supercritical pitchfork type as in the classic case discussed in Section \ref{sect:AC}. Indeed, the random coefficient appears only in the linear part and hence does not affect the cubic term, the sign of which determines the type of the bifurcation. Finally, the local branches can be globally extended; this follows by applying the same argument as in Section \ref{eq:AC} for $\mathbb{P}_\YY-\text{a.e.\ } \yy \in \Gamma$. 
    
    Moving forward from Proposition \ref{prop:bif_rv_het_rand}, we can write a branch arising at a generic bifurcation point $p^*_i(\yy)$ for $\yy \in \Gamma$ fixed (i.e., corresponding to the generic simple eigenvalue $\lambda_i^{g}(\yy)$) as follows (cf.\ \eqref{eq:symmetric_branch}),    \begin{equation}\label{eq:branch_het_rand_general}
        \gamma^+_i(\yy) = \{( r(s,\yy),u(s,\yy)) \, \vert \, s \in [0,s_{c,\yy}^+], (r(0,\yy),u(0,\yy)) = (p^*_i(\yy) ,0)\},
    \end{equation}
    where we denote the right endpoint of the $s$-interval by $s_{c,\yy}^+$ to highlight the dependence of curve parametrization on the realizations of $\YY$. 
    The analogous holds for $\gamma_i^-(\yy)$. Note that from now on we drop the positive/negative superscript to distinguish between the two symmetric branches (cf.\ \eqref{eq:symmetric_branch}) and consider the positive one only (the discussion can be straightforwardly adjusted to the negative one). This returns a characterization of the curve $\gamma_i(\yy)$ in terms of points $(r(\yy),u(\yy))$, hence moving away from the classic point of view, where the first component of a bifurcation curve is the deterministic bifurcation parameter. 
    %\Cquest{Crucially, this classic perspective is retrieved when performing a shift of magnitude $p^*_i(\yy)$ in the first coordinate of the curve $\gamma_i$ such that each branch can be written for $\yy \in \Gamma$ fixed as  
    %\begin{align}\label{eq:branches_rand}
    	%\gamma_i(\yy) & = \{ \left( (p^*_i(\yy),0 \right)  + \left( p(s),  u(s,\yy) \right) \, \lvert \, s \in [0,s_{c,\yy}], \ (p(0),u(0)) = (0,0) \}.
    %\end{align}  
    %Indeed, in this case each curve is determined by the connected set of points $(p,u(\yy))$ that have only the second component $\yy$-dependent. The first component is then the usual deterministic bifurcation parameter to be explored in the interval $[0,s_{c,\yy}]$. }

    The regularity of the bifurcation branches with respect to $\yy$ is a non-trivial matter as it requires the investigation of the regularity of the solution of a nonlinear problem. 
    We are aware of only a few and very recent works on similar problems \cite{bahn2023semilinear,Chernov:2024b}. However, in our case we also have to deal with the non-uniqueness of the solution which likely requires some additional work.
    We plan to address this point in future studies. 
    For this work, especially to allow for the use of the PC method (see Section \ref{sect:PC}), we assume that each branch $\gamma_i(\yy)$ can be interpreted as a realization of a well-defined and sufficiently regular stochastic process indexed by the curve parameter $s$. Specifically, let us then introduce the weighted Hilbert space $L^2_{\rho}(\Gamma;\Rset)$ of square-integrable real-valued functions on $\Gamma$ with respect to $\Pprob_{\YY}$, equipped with the associated inner product $\langle \cdot,\cdot\rangle _{L^2_{\rho}}$ and norm $\lVert \cdot \rVert_{L^2_{\rho}}$. We assume the following. 
    \begin{hp}\label{hp:u_regularity}
    We assume that each random branch $\gamma_i(\YY)$ is a $\mathbb{P}_{\YY}$-square integrable stochastic process $\left\{ {\bf X}_i(s)\right\}_{s \in [0,S]}$, i.e.~$r(s,\cdot) \in L^2_{\rho}(\Gamma;\Rset)$ and $u(s,\cdot) \in L^2_{\rho}(\Gamma;L^2(D;\Rset))$ for all $s \in [0,S]$, with realizations $\left(r(s,\yy),u(s,\yy) \right) \in \gamma_i(\yy)$. 
    %\begin{equation}\label{eq:curve_as_stoch_proc}
    %    \left\{ {\bf X}_i(s)\right\}_{s \in [0,S]} = 
    %    \left\{\left(r(s,\YY),u(s,\YY) \right) \right\}_{s \in [0,S]},
    %\end{equation}
    %where 
    We denote by $S>0$ the right endpoint of the curve parametrization interval.
    \end{hp}
    \noindent
    Note that $S$ has to be chosen such that all the realizations of the curve are well-defined, i.e., such that no bifurcation point appears on the realizations of the branch within the parametrization interval $[0,S]$ (cf.\ the discussion after \eqref{eq:symmetric_branch}). In particular, since $s=0$ corresponds to the bifurcation point, the $\mathbb{P}_{\YY}$-square integrability is assumed for the bifurcation point as well.
    %Furthermore, we observe that each branch can be interpreted as realization of a random parametrized curve $\{ {\bf X}(s) \}_{s \in [0,+\infty)}$ indexed by the arclength parameter $s$ such that
    %\begin{equation}\label{eq:curve_as_stoch_proc}
    %    \left\{ {\bf X}(s)\right\}_{s \in [0,+\infty)} = 
    %    \left\{\left(r(s,\YY),u(s,\YY) \right) \right\}_{s \in [0,+\infty)}.
    %\end{equation}
    % Indeed, for a fixed $s \in [0,+\infty)$, the couple $\left(r(s,\YY),u(s,\YY) \right) $ is a random variable as it is solution of $\widehat F(r,u(r);\yy) = 0$ for $r(\yy) \geq p^*_i(\yy)$. 
    Then, since the expected value of a stochastic process is a well-defined object, we introduce the concept of  \textit{mean bifurcation curve}. 
    \begin{defn}[Mean bifurcation curve]
    The mean bifurcation curve is 
    \begin{equation}\label{eq:mean_bif_curve}
        \overline{\gamma}_i \colon = \left\{ \left(\bar r (s), \bar u(s) \right)  = \left(\mathbb{E}[r(s,\YY)], \mathbb{E}[u(s,\YY) ] \right), \, s \in [0,S], \left(\bar r(0), \bar u(0)\right) = (\mathbb{E}[p^*_i(\YY)],0) \right\}, 
    \end{equation}
    i.e., the curve that bifurcates at the expected value of the bifurcation point, with the two components being the expected value of the stochastic process 
    $\left\{ {\bf X}_i (s)\right\}_{s \in [0,S]}$ introduced in Assumption \ref{hp:u_regularity}. 
    \end{defn}    
    Similarly, we can consider higher order moments of the stochastic process and define corresponding ``characteristic'' curves to describe each random bifurcation branch $\gamma_i(\YY)$. 
    
    To summarize, the Allen--Cahn equation with random coefficients \eqref{eq:AC_random_general} together with Assumption \ref{hp:rf}, \ref{hp:finite_dim_noise}, and \ref{hp:rf_y} exhibits supercritical pitchfork bifurcations: the corresponding bifurcation points are random variables and bifurcation curves are random curves. The UQ analysis of these objects is discussed in Section \ref{sect:AC_het_rand}, where we introduce our proposed numerical methodology based on the PC expansion. 
    However, the UQ analysis turns out to be possible at an analytic level in the particular case of spatially-homogeneous random coefficients, i.e.\ $q(\xx,\yy) = q(\yy)$. 
    We detail this discussion in the following section. 

    \section{Spatially-homogeneous random coefficients}\label{sect:AC_hom_rand}
	
    In this section we focus on the case of spatially-homogeneous randomness: We set $g(\xx,\yy) = g(\yy)$ in \eqref{eq:AC_random}. 
    In this case we can refine the results on bifurcation points (cf.\ Proposition \ref{prop:bif_rv_het_rand}) and bifurcation curves. 
    In particular, we obtain a closed form expression for the pdf of the bifurcation points, and show that the nontrivial bifurcation branches are shifts of a common reference branch.
    
    Let us first observe that, for $\yy \in \Gamma$ fixed, the operator $(\der_u \widehat F_{\yy}) (p,0)$ in \eqref{eq:derF_het_rand} is simply a shifted Laplacian. 
    Then, for the bifurcation points the following holds.
    \begin{prop} \label{prop:bif_rv_hom_rand}
        Under Assumption \ref{hp:rf} \ref{hp:finite_dim_noise}, and \ref{hp:rf_y}, the bifurcation points on the trivial equilibrium of \eqref{eq:AC_random} with $g(\xx,\yy) = g(\yy)$ form a sequence 
        \begin{equation}\label{eq:rv_bif_pt}
        (p^*_i(\yy),0) \in \Rset \times L^2(D;\Rset) \quad \text{with} \quad p^*_i(\yy) = -\lambda_i - g(\yy), \quad i \in \Nset,
        \end{equation}
        where $\lambda_i$ is the $i$th-eigenvalue of the Laplacian with homogeneous Dirichlet boundary conditions. 
        Moreover, the bifurcation values $p^*_i(\YY)$ are random variables 
        %with finite second order moment (i.e.\ $p_i^*(\YY) \in L^2_{\rho_g}(\Gamma;\Rset)$) 
        with associated pdf
        \begin{equation}\label{eq:pdf_bif_pt}
	       \rho_{p_i^*}(\yy) = \rho_g (- \lambda_i - \yy),
	    \end{equation}        
        where $\rho_g$ is the pdf of the random variable $g(\YY)$.
    \end{prop}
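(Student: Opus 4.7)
The plan is to specialize Proposition~\ref{prop:bif_rv_het_rand} to the spatially-homogeneous setting $g(\xx,\yy) = g(\yy)$ and then derive the pdf of the bifurcation points via a one-dimensional change of variables.

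First I would observe that for each fixed $\yy \in \Gamma$, the linearized operator in \eqref{eq:derF_het_rand} reduces to $\Delta + g(\yy)\cdot\text{id}$, which is a scalar shift of the Dirichlet Laplacian. Therefore, each Dirichlet Laplacian eigenfunction $\varphi_i \in H^2(D;\Rset)\cap H^1_0(D;\Rset)$ with $\Delta\varphi_i = \lambda_i \varphi_i$ satisfies $(\Delta + g(\yy))\varphi_i = (\lambda_i + g(\yy))\varphi_i$, and conversely every eigenfunction of the shifted operator is a Laplacian eigenfunction. Hence the spectra coincide up to the shift $g(\yy)$, so $\lambda^g_i(\yy) = \lambda_i + g(\yy)$, and Proposition~\ref{prop:bif_rv_het_rand} yields $p^*_i(\yy) = -\lambda^g_i(\yy) = -\lambda_i - g(\yy)$, which is exactly \eqref{eq:rv_bif_pt}. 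Note that this argument does not require simplicity of $\lambda_i$: the shift preserves multiplicities and eigenspaces, and the identification of the bifurcation values goes through regardless.

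Next, to confirm that each $p^*_i(\YY)$ is a random variable, I would invoke Assumption~\ref{hp:rf_y}: analyticity of $g$ in $\yy$ implies continuity and hence Borel measurability, so $g(\YY)$ is a real-valued random variable on $(\Gamma,\mathcal{B}(\Gamma),\Pprob_\YY)$ with pdf $\rho_g$, and the affine transformation $p^*_i(\YY) = -\lambda_i - g(\YY)$ inherits this measurability.

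For the density formula \eqref{eq:pdf_bif_pt}, I would apply the change-of-variables formula to the strictly decreasing affine map $T\colon z \mapsto -\lambda_i - z$, whose inverse is $T^{-1}(y) = -\lambda_i - y$ with $|dT^{-1}/dy| = 1$, giving
\[
\rho_{p^*_i}(y) \,=\, \rho_g\!\left(T^{-1}(y)\right)\left|\frac{dT^{-1}}{dy}\right| \,=\, \rho_g(-\lambda_i - y).
\]
There is no substantive obstacle: the only real content is the observation that spatial homogeneity of $g$ turns $\Delta + g(\yy)\cdot\text{id}$ into a scalar shift of the Dirichlet Laplacian, which fully decouples the spatial and stochastic parts of the spectrum; the remainder is a routine pdf computation.
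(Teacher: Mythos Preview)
Your proof is correct and follows essentially the same approach as the paper: identify the linearized operator as a scalar shift of the Dirichlet Laplacian so that $p^*_i(\yy)=-\lambda_i-g(\yy)$, and then obtain the pdf from the affine transformation of $g(\YY)$. Your argument is simply more explicit than the paper's, spelling out the change-of-variables computation and the measurability step that the paper leaves implicit.
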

    \begin{proof}
    Let $\yy \in \Gamma$ be fixed. The eigenvalues of $(\der_u \widehat F_{\yy}) (p,0)$ are shifts of the eigenvalues of the Laplacian on $D$ subject to homogeneous Dirichlet boundary conditions. 
    Hence \eqref{eq:rv_bif_pt} follows straightforwardly. Moreover, we observe that bifurcation values are obtained by shifting the random variable $g(\YY)$ by the deterministic quantity $\lambda_i$. 
    From this we conclude that bifurcation points are random variables and that their pdf is the shifted pdf of $g(\YY)$, i.e.\ \eqref{eq:pdf_bif_pt}.  
    \end{proof}
    
    Regarding the bifurcation branches we observe the following: Any nontrivial branch of equilibria \eqref{eq:branch_het_rand_general} is a random shift of a deterministic reference bifurcation branch. %;  we refer to it as reference branch in the following. 
    A natural choice for the reference branch $\gamma_i^{\text{ref}}$ is the curve corresponding to the case $g(\yy) \equiv 0$, which starts at the point $(p^{*,\text{ref}}_i,0) = (-\lambda_i,0)$
    \begin{equation}\label{eq:branch_ref}
        \gamma^{\text{ref}}_i = \left\{ \left(p(s),u(s) \right) \, \vert \, s \in [0,S], (p(0),u(0))= (-\lambda_i,0) \right\}.
    \end{equation}
    Then, the following holds.
    
    \begin{prop}\label{prop:branch_hom_rand} 
    Let Assumption \ref{hp:rf} and \ref{hp:finite_dim_noise} hold and consider \eqref{eq:AC_random} with $g(\xx,\yy) = g(\yy)$. Then, the nontrivial branch of equilibria arising at the bifurcation point $(p^*_i(\yy),0) = (-\lambda_i - g(\yy), 0)$ 
    is given by 
    \begin{equation}\label{eq:branch_hom_rand_shift}
    \gamma_i(\yy) = \left\{ \left( -g(\yy) , 0 \right) + \left(p(s),u(s) \right) \, \vert \, \left(p(s),u(s) \right) \in \gamma^{\text{ref}}_i \right\},
    \end{equation}
    with $\gamma^{\text{ref}}_i$ as in \eqref{eq:branch_ref}. 
    Furthermore, the mean bifurcation curve \eqref{eq:mean_bif_curve} coincides with the reference branch. 
    \end{prop}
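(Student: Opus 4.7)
The plan is to exploit the observation that the random coefficient $g(\yy)$ appears only as an additive, spatially-constant shift of the bifurcation parameter. Concretely, defining $\tilde{p} := p + g(\yy)$ turns the random stationary equation $\widehat F_{\yy}(p,u) = \Delta u + (p + g(\yy))u - u^3 = 0$ into the deterministic one $F(\tilde p, u) = \Delta u + \tilde p u - u^3 = 0$. Hence the map $(p,u) \mapsto (p + g(\yy), u)$ is a bijection between the solution set of \eqref{eq:AC_random} (for the fixed realization $\yy$) and the solution set of \eqref{eq:AC_stationary}, and the trivial branch is preserved. First, I would verify this equivalence at the level of equations and note that the bifurcation point $(-\lambda_i, 0)$ of the deterministic problem maps under the inverse shift $\tilde p \mapsto \tilde p - g(\yy)$ precisely to $(-\lambda_i - g(\yy), 0) = (p^*_i(\yy), 0)$, matching Proposition \ref{prop:bif_rv_hom_rand}.

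Next I would transport the reference branch $\gamma_i^{\text{ref}}$ through this shift: if $(p(s), u(s)) \in \gamma_i^{\text{ref}}$ parametrizes a solution of $F(p,u)=0$ with $(p(0), u(0)) = (-\lambda_i, 0)$, then $(p(s) - g(\yy), u(s))$ solves $\widehat F_{\yy}(\cdot,\cdot)=0$ and starts at $(p^*_i(\yy), 0)$. Since the bijection is a translation in the $p$-coordinate only, connectedness and the global extension of the branch (guaranteed in 1D by the Chafee--Infante result recalled in Section \ref{sect:AC}, and locally by Crandall--Rabinowitz in general) are preserved. This yields exactly the representation \eqref{eq:branch_hom_rand_shift}. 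A small point to be careful about is that the parametrization interval $[0,S]$ is realization-independent here, which is consistent with Assumption \ref{hp:u_regularity} because the shift does not create additional bifurcation points along the branch.

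For the second claim, the identification \eqref{eq:branch_hom_rand_shift} gives the pathwise representation $r(s, \yy) = p(s) - g(\yy)$ and $u(s,\yy) = u(s)$, where $(p(s),u(s)) \in \gamma_i^{\text{ref}}$ is deterministic. Taking expectations and using linearity together with Assumption \ref{assump-B}, namely $\mathbb{E}[g(\YY)] = 0$, gives $\bar r(s) = p(s) - \mathbb{E}[g(\YY)] = p(s)$ and $\bar u(s) = u(s)$. Hence $\overline{\gamma}_i = \gamma_i^{\text{ref}}$ as sets in $\Rset \times L^2(D;\Rset)$, and in particular $(\bar r(0), \bar u(0)) = (-\lambda_i, 0) = (\mathbb{E}[p^*_i(\YY)], 0)$, consistently with \eqref{eq:mean_bif_curve}.

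I do not anticipate a serious obstacle: the entire argument is driven by the elementary observation that a spatially-constant random term on the linear part of the Allen--Cahn operator acts as a translation of the bifurcation parameter. The only subtlety worth flagging is ensuring that the parametrization $s$ can be chosen uniformly in $\yy$, which follows because the shift is purely in the $p$-coordinate and does not alter the internal geometry of the branch; this also ensures that the $L^2_\rho$-integrability of Assumption \ref{hp:u_regularity} reduces to integrability of $g(\YY)$, which holds by Assumption \ref{assump-A}.
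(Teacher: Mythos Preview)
Your proposal is correct and follows essentially the same approach as the paper: both arguments rest on the observation that, in the spatially-homogeneous case, the random term $g(\yy)$ enters only as an additive shift of the bifurcation parameter, so that solutions of $\widehat F_{\yy}(p,u)=0$ are in bijection with solutions of $F(\tilde p,u)=0$ via $\tilde p = p + g(\yy)$, and the mean bifurcation curve coincides with the reference branch because $\mathbb{E}[g(\YY)]=0$. Your write-up is somewhat more explicit about the parametrization being realization-independent and about the integrability check, but these are refinements rather than a different route.
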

    \begin{proof}
    Let $\yy \in \Gamma$ be fixed. The branch arising at the bifurcation point $(p^*_i(\yy),0) = (-\lambda_i - g(\yy), 0)$  is (cf.\ \eqref{eq:branch_het_rand_general})
    \[
        \gamma_i(\yy) = \{( r(s,\yy),u(s,\yy)) \, \vert \, s \in [0,S], (r(0,\yy),u(0,\yy)) = (-\lambda_i - g(\yy),0)\}.
    \]
    By definition, the points $(r(\yy),u(\yy)) \in \gamma_i(\yy)$ are solutions of $\widehat F_{\yy}(r,u) = 0$ for $r(\yy) \geq p^*_i(\yy)$. The solution points are translation-invariant: The points $(p, u(p)) = (r(\yy)+g(\yy),u(r(\yy)+g(\yy)))$ solve the same equation for $p = r+g(\yy) \geq -\lambda_i$. Crucially, such points form the reference branch \eqref{eq:branch_ref}.
    The statement on the mean bifurcation curve follows from the fact the random field $g$ has zero expected value.
    \end{proof}

    The above two propositions are crucial for performing the UQ analysis of bifurcations. This is the focus of the following section. 

    \subsection{Uncertainty quantification analysis of bifurcations} \label{sect:UQ_hom_rand}
    In this section we detail how to assess the impact of the randomness in the coefficients on the overall behavior of the dynamical system associated to \eqref{eq:AC_random}. To ease the presentation we introduce the following example. 
    
    Let us consider the Allen--Cahn equation \eqref{eq:AC_random} with $g(\yy) = y$, $y \in \Gamma \subset \Rset$, where $N=1$, i.e.\ we assume that the randomness is modeled by a single random variable. The simplification is made to keep the presentation simple yet informative enough. We set $D = [a,b] \in \Rset$, $a,b \in \Rset$, $a<b$ and perform a uniform discretization in space with $m+2$ grid points $x_j, j=0,\ldots,m+1$ with $x_0 = a$ and $x_{m+1}=b$ with grid size $h = (b-a)/(m+1)$. 
    We denote by $u_j(y)$ the approximation of the solution at the grid points, i.e.\ $u_j(y) \approx u(x_j,y)$, $y \in \Gamma$ and introduce the vector $\uu(y) = [u_0(y), \ldots, u_{m+1}(y)]^{\text{T}} \in \Rset^{m+2}$, with $u_0(y)=u_{m+1}(y)=0$. 
	Then, we consider the following discretized form of \eqref{eq:AC_random} in the interior of $D$
	\begin{equation}\label{eq:AC_discrete_hom}
		K \uu + \left(p  + y \right)   \uu - \uu^3 = \mathbf{0}, \quad p \in \Rset, \, y \in \Gamma
	\end{equation}
	where $K\in\Rset^{m \times m}$  is the standard central finite-difference discretization matrix of the Laplacian with homogeneous Dirichlet boundary conditions (note that any other discretization scheme is possible). The results reported in this section are obtained with $D=[0,\pi]$ and $m=20$.

    \paragraph{Bifurcation points}
    We consider two input random variables that satisfy Assumption \ref{hp:rf_y}: a uniform random variable on $[-1,1]$ and a standard Gaussian truncated on the interval $[-2,2]$. The distribution of the corresponding first three bifurcation values is known analytically and follows from Proposition \ref{prop:bif_rv_hom_rand}. In Figure \ref{fig:eig_hom_rand} panel (a) and panel (b) we show the pdfs corresponding to the uniform and the Gaussian case, respectively. We display with dashed lines the corresponding eigenvalue with opposite sign $-\lambda^K_i, i=1,2,3$ of the discretization matrix $K$ of the Laplacian. Note that in this case the eigenvalues are known analytically and employing the spatially discretized version of the PDE is not required.

    \begin{figure}[tb]
		\begin{minipage}{0.5\textwidth}
			\centering
			\includegraphics[scale=0.49]{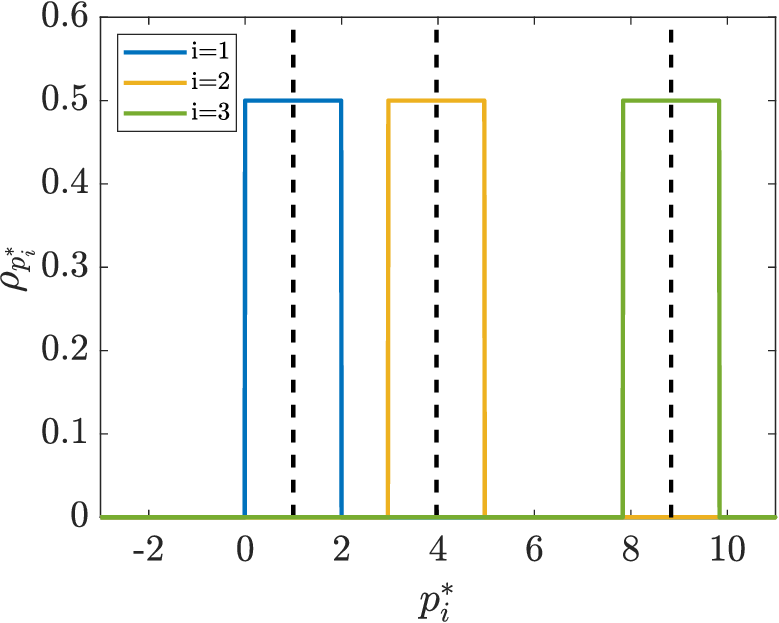}
			\subcaption{$Y \sim \mcU([-1,1])$.}
		\end{minipage}% \hspace{-4em}
		\begin{minipage}{0.5\textwidth}
			\centering
			\includegraphics[scale=0.49]{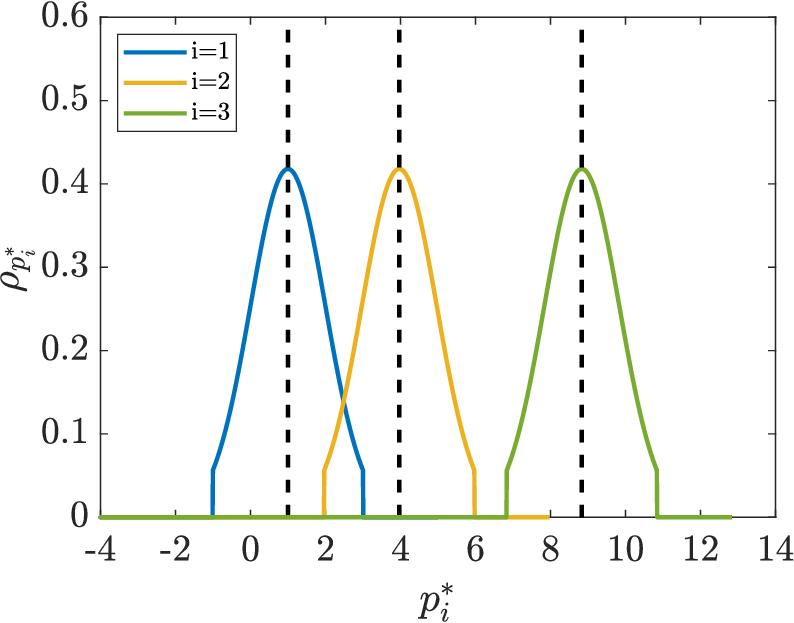}
			\subcaption{$Y \sim \mcN(0,1)$ truncated on $[-2,2]$.}
		\end{minipage}
		\caption{Allen--Cahn equation \eqref{eq:AC_random} with $g(x,y)=y$. Pdf of the first three bifurcation value $p_i^*(Y)$, $i=1,2,3$; the dashed lines mark the corresponding eigenvalues of the matrix $-K$, cf.\ \eqref{eq:pdf_bif_pt}.}
		\label{fig:eig_hom_rand}
	\end{figure}
 
    %Note that the curve overlap for subsequent bifurcation points in panel (b) depends on the range of variability of the input random variable. However, the possible overlap does not affect the probabilistic assessment of the stability region of the trivial equilibrium for which one has to consider the cdf of the first bifurcation point only. 
	
    \paragraph{Probability of bifurcating}% \paragraph{Stability region of the trivial equilibrium}
    Given a value $\bar{p} \in \Rset$, the probability of the first bifurcation point to happen at values $p \leq \bar{p}$ is 
	\begin{equation}\label{eq:cdf}
	\Psi_{p_1^*}(\yy) := \Pprob_\YY(p_1^*(\yy)\leq\bar{p}) = \int_{-\infty}^{\bar{p}} \rho_{p_1^*} (\yy) \, \mathrm{d} \yy,
	\end{equation}
    which, by definition, is the cumulative density function (cdf) of $p_1^*(\YY)$.
    Moreover, since the boundary of the stability region of the trivial equilibrium is identified by the first bifurcation point $p_1^*$, the quantity in \eqref{eq:cdf} can be also regarded as the probability of the trivial equilibrium to change its stability type at values $p \leq \bar{p}$ or the probability of the trivial equilibrium to be stable at values $p \leq \bar{p}$. Finally, note that for the example considered in this section the distribution of $g(\YY)$ is known analytically and the above probability can be computed analytically. 
    
    % As recalled in Section \ref{sect:AC}, the boundary of the stability region of the trivial equilibrium is identified by the first bifurcation point $p_1^*$, that we have shown to be a random variable in Proposition \ref{prop:bif_rv_hom_rand}. Hence, such boundary can be assessed only probabilistically. In particular, given a value $\bar{p} \in \Rset$, the probability of the first bifurcation point to happen at values $p \leq \bar{p}$ is 
	%\begin{equation}\label{eq:cdf}
	%\Psi_{p_1^*}(\yy) := \Pprob_\YY(p_1^*(\yy)\leq\bar{p}) = \int_{-\infty}^{\bar{p}} \rho_{p_1^*} (\yy) \, \mathrm{d} \yy,
	% \end{equation}
    % which, by definition, is the cumulative density function (cdf) of $p_1^*(\YY)$. Moreover, note that the same quantity can be also regarded as the probability of the trivial equilibrium to change its stability type at values $p \leq \bar{p}$ or the probability of the trivial equilibrium to be stable at values $p \leq \bar{p}$. Note that in this case the distribution of $g(\YY)$ is known analytically and the above probability can be computed exactly. 

    %Further, due to the fact that each nontrivial random branch has shown to be a random translation of the reference branch \eqref{eq:branch_ref}, we can conclude that all realizations of a nontrivial branch inherit the stability type from the corresponding reference branch. Hence, we can borrow the discussion in Section \ref{sect:stability} and conclude that the first nontrivial random branch is stable, whereas the subsequent ones are unstable

    \paragraph{Nontrivial bifurcation curves}
    Sampling the nontrivial bifurcation curves means computing different instances of the random branch $\gamma_i(\YY)$ corresponding the different realizations $\yy \in \Gamma$ of the random variable $\YY$.
    In general, this requires running multiple times a suitable numerical method for the approximation of each curve. The class of methods to this aim are the so-called continuation methods. These methods typically rely on a predictor-corrector strategy to extend the solution of the nonlinear problem at one value of the bifurcation parameter to another parameter value. See Appendix \ref{appendix:num_cont} for a brief recall and e.g.\ \cite{dankowicz2013,kuznetsov2013,uecker2021} for a more extensive treatment.
    In the case of spatially-homogeneous randomness, thanks to Proposition \ref{prop:branch_hom_rand}, we need to run only once a continuation algorithm for the approximation of the reference branch \eqref{eq:branch_ref}. All the other samples can then be obtained by simply shifting the reference curve according to \eqref{eq:branch_hom_rand_shift}. Crucially, the cost of the resulting sampling strategy amounts to the cost of computing \textit{one} deterministic branch. 

    \begin{figure}
		\begin{minipage}[t]{0.48\textwidth}
			\centering
			\includegraphics[scale=0.49]{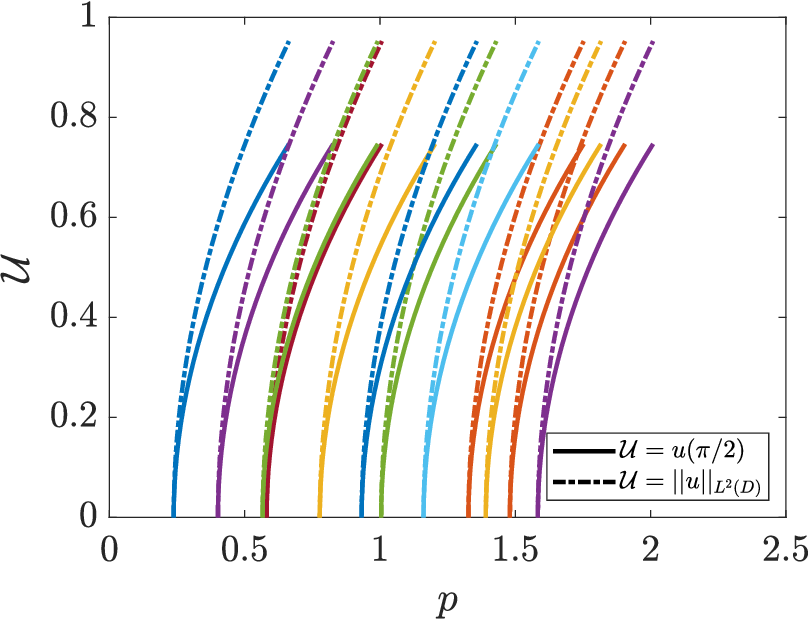}
			\subcaption{Samples of $\gamma_1(Y)$ for two observables $\mcU$ associated to its $u$-component. }
		\end{minipage}\hfill
		\begin{minipage}[t]{0.5\textwidth}
			\centering
			\includegraphics[scale=0.49]{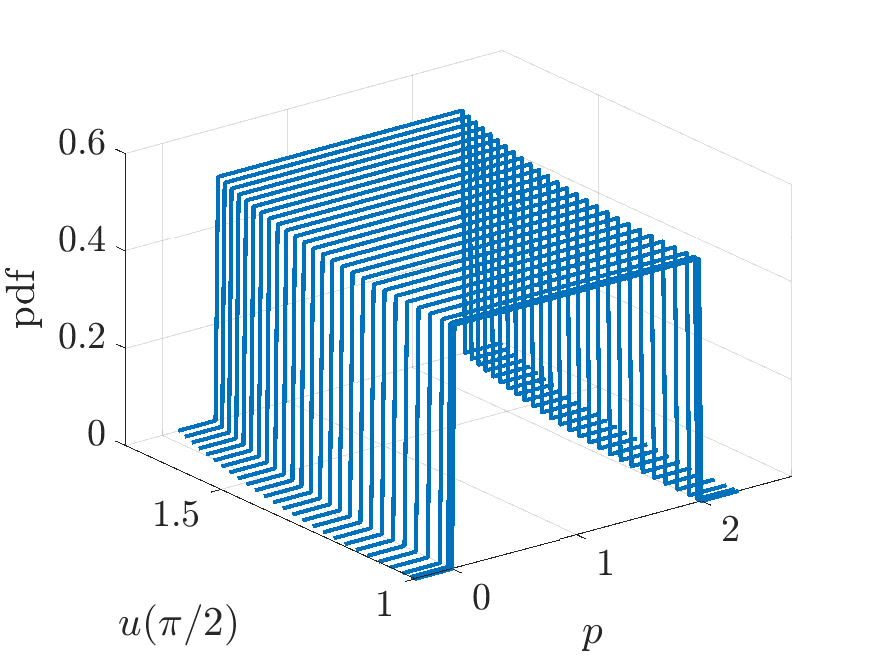}
			\subcaption{Pdf along $\gamma_1(Y)$, cf.\ \eqref{eq:pdf_branch}.}
		\end{minipage}
	\caption{Allen--Cahn equation \eqref{eq:AC_random} with $g(x,y)=y$, $Y \sim \text{Unif}(-1,1)$. First nontrivial bifurcation branch $\gamma_1(Y)$.}
	\label{fig:branch_hom_rand}
	\end{figure}
    
    In Figure \ref{fig:branch_hom_rand} we provide two ways to visualize the first nontrivial branch $\gamma_1(Y)$. Specifically, in panel (a) we consider two scalar observables, the value of the solution at the center of $D$ (solid line) and the $L^2(D)$-norm of the solution (dash-dotted line), and display several realizations of them according to \eqref{eq:branch_hom_rand_shift}. The sample curves are depicted with different colors to highlight the fact that they correspond to different realizations of $Y$. For the approximation of the reference bifurcation branch we use the numerical continuation software Continuation Core and Toolboxes (COCO) \cite{dankowicz2013} (version October 26, 2023) available at \cite{coco:sourceforge}. 
    Moreover, we display the pdf associated to the bifurcation curve, namely the pdf of the following random sets along the branch which correspond to discrete values of the parameter $s$, i.e.
	\begin{equation}\label{eq:pdf_branch}
		\{(r(s_l,y),u(s_l,y)) \in \gamma_1(Y), s_l \in [0,S] \}.
	\end{equation} 
    In panel (b), we plot such pdf for the observable $\mathcal{U}= u(\pi/2)$.
    %\Cquest{In panel (b), we display the pdf associated to the bifurcation curve, namely the pdf of the following random sets along the branch which correspond to discrete values of the parameter $s$, i.e.
	%\begin{equation}\label{eq:pdf_branch}
	%\{(r(s_l,y),u(s_l,y)) \in \gamma_1(Y), s_l \in [0,S] \}.
	%\end{equation}}
	As shown in Proposition \ref{prop:branch_hom_rand}, the $u$-component of a branch is deterministic. Hence the pdf of the random sets is actually a univariate function. Moreover, it follows that the pdf of the bifurcation point is simply ``transported'' along the reference curve $\gamma_1^{\text{ref}}$. 	
	
	Further, in Figure \ref{fig:mean_sol_hom_rand} panel (a) we display the solution of \eqref{eq:AC_random} along the first bifurcation branch, i.e.\ for increasing values of $p\geq p_1^*$. In particular, the trivial solution is obtained for $p=p_1^*$ and the curve with the largest maximum value corresponds to the largest value of $p$ considered (i.e., the point where we stop the numerical continuation method for the computation of the reference branch). Since the $u$-component of a branch is deterministic, this plot looks the same for each realization of $Y$. Finally, in Figure \ref{fig:mean_sol_hom_rand} panel (b) we show the mean bifurcation diagram up to the first bifurcation branch, i.e.\ the bifurcation diagram representing the trivial equilibria and the first mean bifurcation branch. 
	
	\begin{figure}
		\begin{minipage}[t]{0.48\textwidth}
			\centering
			\includegraphics[scale=0.45]{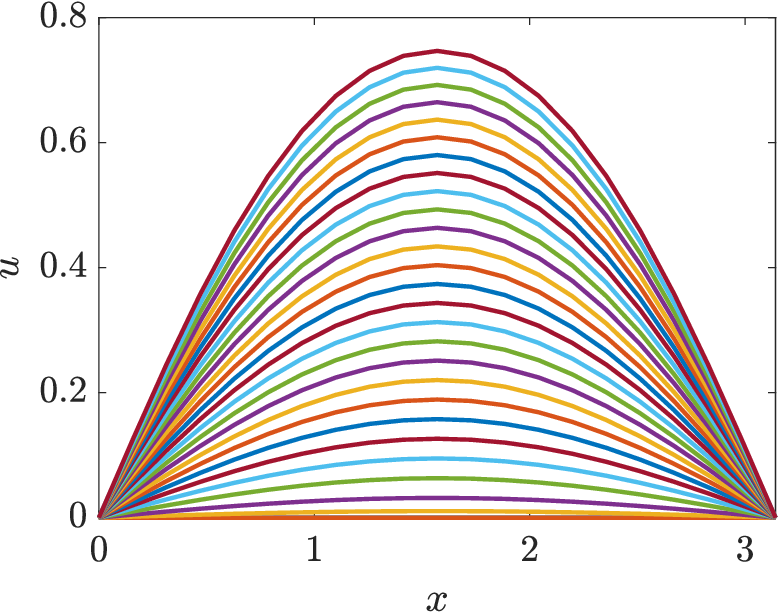}
			\subcaption{Solution along the first bifurcation branch. The different curves correspond to increasing values of the bifurcation parameter $p$.}
		\end{minipage}
		\begin{minipage}[t]{0.48\textwidth}
			\centering
			\includegraphics[scale=0.45]{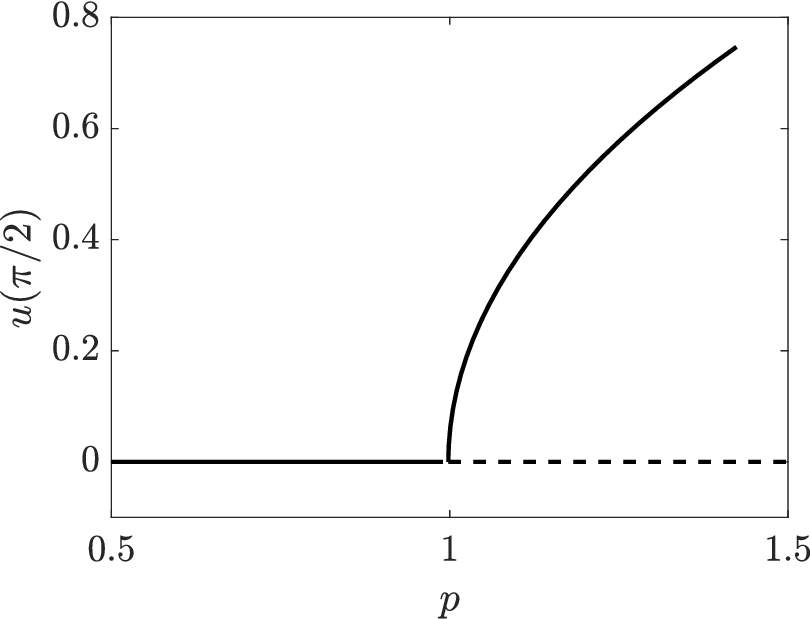}
			\subcaption{Mean bifurcation diagram.}
		\end{minipage}
		% \hspace{-4em}
	\caption{Allen--Cahn equation \eqref{eq:AC_random} with $g(x,y)=y$, $Y \sim \text{Unif}(-1,1)$. }
	\label{fig:mean_sol_hom_rand}
	\end{figure}
% \end{example}

%%%%%%%%%%%%%%%%%%%%%%%%%%%%%%%%%%%%%%%%%%%%%%%%%%%%%%%%%%%%%%%%%%%%%%%%%%%%%%%%%%%%%%%%%%%%%%%%%%%%%%%%%%%%%%%%%%%%%%%%%%%%%%%%%%%%%%%%%%%%%%%%%%%%%%%%%%%%%%%%%%%%%%%%%%%%%%%%%%%%%%%%%%%%%%%%%%%%%%%%%%%%%%%%%%%%%%%%%%%%%%%%%%%%%%%%%%%%%%%%%%%%%%%%%%%%%%%%%%%%%%%%%%%%%%%%%%%%%%%%%%%%%%%%%%%%%%%%%%%%%%%%%%%%%%%%%%%%%%%%%%%%         
\section{Spatially-heterogeneous random coefficients}\label{sect:AC_het_rand}

    In this section we describe the numerical approach that we propose for the UQ analysis of bifurcations of \eqref{eq:AC_random} in the general case of a spatially-heterogeneous random coefficient $g(\xx,\YY)$. 
    
    First, we highlight the fact that in this case, in contrast to the spatially-homogeneous case discussed in the previous section, the distribution of each bifurcation point $p^*_i(\YY)$ is not known analytically (cf.\ Proposition \ref{prop:bif_rv_het_rand}) and needs to be estimated. This requires the approximation of the pdf of the simple eigenvalues of the random operator $\Delta +g(\xx,\YY) \cdot \text{id}$ and requires the solution of a set of eigenvalue problems corresponding to a number of samples of $\YY$.  
    Second, the approximation of the nontrivial bifurcation branches $\gamma_i(\YY)$ is not as trivial as in the spatially-homogeneous case. Computing curve samples would require to solve multiple times a nonlinear equation by continuation methods. Moreover, the approximation of the mean bifurcation curve $\overline \gamma_i $ requires the approximation of an expected value, cf.\ \eqref{eq:mean_bif_curve}.
    A naive approach based on a Monte-Carlo average would require a large number of sample curves and hence running several times a numerical continuation algorithm that is itself quite expensive. 
    However, such an approach would perhaps still be computationally feasible for the case of the Allen--Cahn equation, but certainly shows severe limitations for more complicated problems. % VWe discuss more on this at the end of Section \ref{sect:UQ_het_rand}. 
    Hence, we propose an approach for the UQ analysis of bifurcations based on the gPC expansion. In the following we recall the basis of the gPC method and set up the framework to be used in the subsequent section.
    
	\subsection{The generalized Polynomial Chaos expansion}\label{sect:PC}
    %\Cquest{Let us first introduce the weighted Hilbert space $L^2_{\rho}(\Gamma;\Rset)$ of square-integrable real-valued functions on $\Gamma$ with respect to the density $\rho$, equipped with the associated inner product $\langle \cdot,\cdot\rangle _{L^2_{\rho}}$ and norm $\lVert \cdot \rVert_{L^2_{\rho}}$. }  
    %Then, t
    The gPC expansion of a real-valued function $f$ with finite second-order moment (i.e.\  $f \in L^2_{\rho}(\Gamma;\Rset)$, see Section \ref{sect:bif_analysis_het_rand}) is a representation of $f$ in terms of orthonormal polynomials with respect to the $L^2_\rho$-inner product in \eqref{eq:PC_generic}. 
    %$\rho(\yy) \mathrm{d}\yy$-orthonormal polynomials. % It was first proposed for Gaussian measures in \cite{wiener1938} and extended to the non-Gaussian case in \cite{XiuKarniadakins2002:gPC} with the name of gPC. 
    The truncated expansion is mean-square convergent, i.e.\ convergent with respect to $\lVert \cdot \rVert_{L^2_{\rho}}$; this follows from the Cameron--Martin theorem for the Gaussian case (see \cite{CameronMartin1947}) and was shown in the general case in \cite{ernst2012}. 
    
    We construct a basis of $ L^2_{\rho}(\Gamma;\Rset)$ consisting of 
    %$\rho(\yy) \mathrm{d}\yy$
    $L^2_\rho$-orthonormal polynomials, that can be written as products of univariate orthonormal polynomials, since $ L^2_{\rho}(\Gamma;\Rset) = \bigotimes_{n=1}^N L^2_{\rho_n}(\Gamma_n;\Rset)$, see e.g.\ \cite{soize2004}. 
    In formulas, let $\aalpha = (\alpha_1, \ldots, \alpha_N) \in \Nset^N_{\geq 0}$ be a multi-index and $\psi_{\aalpha} \in L^2_{\rho}(\Gamma;\Rset)$ an element of the basis. Then, it holds 
	\[
	\psi_{\aalpha}(\yy) = \prod_{n=1}^N \psi_{\alpha_n}(y_n) \quad \text{such that} \quad \langle \psi_{\aalpha}, \psi_{\boldsymbol{\beta}} \rangle_{L^2_{\rho}} = \delta_{\aalpha\, \boldsymbol{\beta}},
	\]
	where $\psi_{\alpha_n}$ are orthonormal polynomials of degree less or equal than $\alpha_n$ in the variable $y_n$. 
	Then, the expansion of $f$ in terms of this basis is the so-called gPC expansion and has the following form 
	\begin{equation}\label{eq:PC_generic}
		f(\yy) = \sum_{\boldsymbol{\alpha} \in \Nset^N_{\geq 0}} \widehat{f}_{\boldsymbol{\alpha}} \psi_{\boldsymbol{\alpha}} (\yy)
		\quad \text{with} \quad \widehat{f}_{\aalpha} = \langle f, \,\psi_{\aalpha} \rangle_{L^2_{\rho}} = \int_{\Gamma} f(\yy) \psi_{\aalpha}(\yy) \rho(\yy) \textrm{d} \yy.
	\end{equation}
    According to \cite{XiuKarniadakins2002:gPC}, the families of polynomials to be employed in the representation can be chosen from the Askey scheme. For example, Legendre polynomials are chosen for uniform random variables, Hermite for Gaussians, Laguerre for gamma random variables, and Jacobi polynomials for beta random variables; we refer to \cite{ernst2012} for an extended discussion.
    	
	The rate of convergence of the gPC approximation and hence the truncation of the expansion depends on the regularity of $f$, see e.g.\ \cite{xiu2010:book}. A common choice is to retain in the sum only the polynomials of total degree at most $z \in \Nset$, which is equivalent of selecting a set of multi-indices $ \Lambda \subset \Nset^N$ of the form 
	$\Lambda = \{ \aalpha \in \Nset^N \, \vert \,  \sum_{n=1}^N \alpha_n  \leq z\}$. Other choices for the multi-index set $\Lambda$ are reported in \cite{back2011,Beck2012:optimal.pol.,shen.wang2010}; an adaptive selection of the multi-index set is also possible, see e.g.\ \cite{sudret2011:adaptive.PCE.with.reg}. 
	Then, the corresponding gPC approximation $f_{\text{PC},\Lambda}(\yy)$ of $f$ is defined as follows  
	\begin{equation}\label{eq:PC_truncated}
		f(\yy) \approx f_{\text{PC},\Lambda}(\yy) := \sum_{\boldsymbol{\alpha} \in \Lambda} \widehat{f}_{\boldsymbol{\alpha}} \psi_{\boldsymbol{\alpha}} (\yy),
	\end{equation}
    with $\Lambda$ a suitably chosen multi-index set.

    For the computation of the gPC approximation we use the Sparse Grids Matlab Kit \cite{piazzola2022:sparse.grids,sgmk:github}, a Matlab package providing an implementation of sparse grids for approximating high-dimensional functions and for surrogate-model-based uncertainty quantification, that is co-developed and maintained by one of the authors.  
    In particular, we highlight the fact that the computation of the expansion coefficients $\widehat{f}_{\aalpha}$ is based on the introduction of a sparse-grids-based approximation in terms of Lagrange polynomials \cite{babuska2010,bungartz2004,xiu2005} to be converted to the equivalent gPC expansion. This approach was proposed in \cite{formaggia2013} and we briefly recall it in Appendix \ref{appendix:sg}. Note that the computation of the coefficients can be performed in several ways, e.g., by quadrature \cite{xiu2007:PC.coeff}
    %, least squares fitting \cite{sudret2011:adaptive.PCE.with.reg}, 
    or Galerkin methods \cite{lemaitre:2010}.
    Instead, the methodology implemented in the Sparse Grids Matlab Kit avoids evaluating high-dimensional integrals according to \eqref{eq:PC_truncated} and has been shown to lead to an approximation of the coefficients that are more accurate than if we would employ a sparse-grids-based quadrature directly, see \cite{constantine2012}.
    Furthermore, this approach is fully non-intrusive: it is based on the stochastic collocation paradigm and as such it requires only values of $f$ at some appropriately selected points in $\Gamma$. This is particularly advantageous in our case as it allows to make use of deterministic solvers for eigenvalue problems for the approximation of bifurcation points and numerical continuation methods for the approximation of random %\Cquest{the computation of sample}
    bifurcation curves in a black-box manner. 

    % Moreover, the use of sparse grids mitigates the computational cost of the estimation of the PC coefficients which typically requires evaluation of high-dimensional integrals.
    Finally, note that the first coefficient in \eqref{eq:PC_truncated} gives the expected value of $f$. Indeed, it holds that 
    $
	\mathbb{E}[f(\YY)] = \langle f, \, \psi_{\bf{0}} \rangle _{L^2_{\rho}} = \sum_{\boldsymbol{\alpha}} \widehat{f}_{\boldsymbol{\alpha}} \langle \psi_{\boldsymbol{\alpha}}, \, \psi_{\bf{0}} \rangle _{L^2_{\rho}} =  \widehat{f}_{\bf{0}}
	$
	thanks to the orthogonality of the basis functions. In a similar way, higher order moments can be derived from the expansion coefficients, leading to an extremely efficient way of computing e.g.\ the mean bifurcation curve \eqref{eq:mean_bif_curve}. 

    \subsection{Uncertainty quantification analysis of bifurcations } \label{sect:UQ_het_rand}

    In this section we discuss the forward UQ analysis of bifurcation points and nontrivial bifurcation curves along the lines in Section \ref{sect:UQ_hom_rand} for the case of spatially-homogeneous random coefficients. In particular, we present how to successfully employ the gPC expansion to this aim. Once again, to aid the discussion we introduce the following example. 

    Let us consider the stationary Allen--Cahn equation \eqref{eq:AC_random} on $D=[0,\pi] \subset \Rset$ with $g(x,\YY) = Y_1 \cos(Y_2 x)$, $Y_1$ and $Y_2$ being independent and uniformly distributed random variables, i.e.\ we consider a periodic potential with random amplitude and frequency and $N=2$. This choice is analogous to what was done in \cite{Kao2014} for the Swift--Hohenberg equation. 
    We perform the spatial discretization as described in Section \ref{sect:UQ_hom_rand} and obtain the following system in the interior of the domain $D$
	\begin{equation}\label{eq:AC_discrete_het}
		K \uu + p \uu + G(\yy) \uu  - \uu^3 = \mathbf{0}, \quad p \in \Rset, \, \yy \in \Gamma ,
	\end{equation}
	where $K\in\Rset^{m \times m}$  is the standard central finite-difference discretization matrix of the Laplacian and $G(\yy) = \text{diag}(g(x_1,\yy),\ldots,g(x_m,\yy)) \in \Rset^{m \times m}$. 
    The results reported in this section are obtained for $m=100$, $ Y_1 \sim \text{Unif}(-1,1)$, and $ Y_2 \sim \text{Unif}(-\pi/2,\pi/2)$. 
	
	  \paragraph{Bifurcation points}
    We construct a gPC approximation $p^{*}_{\text{PC},z}$ of each bifurcation value $p^*$ as  
	\begin{equation*}\label{eq:PC_bifurcation}
		p^*(\yy) \approx p^*_{\text{PC},\Lambda}(\yy) = \sum_{\boldsymbol{\alpha} \in \Lambda} \widehat{p^*_{\boldsymbol{\alpha}}} \psi_{\boldsymbol{\alpha}} (\yy)
	\end{equation*}
	with multi-index set $\Lambda$ dictated by the employed sparse-grids-based approximation as described in Appendix \ref{appendix:sg}
 %\eqref{eq:index_set}, coefficients  $\widehat{p^*_{\boldsymbol{\alpha}}}$
     and polynomials $\psi_{\boldsymbol{\alpha}}$ as described in Section \ref{sect:PC}. Since the underlying probability measure is uniform, the family of Legendre polynomials is chosen, see e.g.\ \cite{XiuKarniadakins2002:gPC}. 
    
	In Figure \ref{fig:eig_het_rand} panel (a) we display the gPC approximation of the first bifurcation value obtained with the multi-index set displayed in Figure \ref{fig:sg_pc} (a) of cardinality 25 (i.e.~$\vert \Lambda\vert = 25$).
     This requires the solution of 25 eigenvalue problems corresponding to the sparse-grids collocation points of symmetric Leja type (see e.g.\ \cite{marchi:leja}) displayed in Figure \ref{fig:sg_pc} (b). 
     %\Cquest{with total degree $z=6$ which requires the solution of 25 deterministic eigenvalue problems}. 
    Figure \ref{fig:eig_het_rand} (b) shows the decay of the root mean-square error $\lVert p^* - p^*_{\text{PC},\Lambda} \rVert_{L^2_{\rho}}$ for multi-index sets of increasing cardinality $\lvert \Lambda\rvert$ (log-log plot) 
 %\Cquest{increasing values of $z=2,\ldots,20$} 
 with respect to a reference expansion consisting of 313 terms and polynomials of total degree less or equal than 24. %\Cquest{total degree 30}. 
 The convergence behavior depends on the regularity of the function to be approximated, which is analytic in this case. An optimal choice of $\Lambda$ would lead to exponential convergence with respect to $\lvert \Lambda \rvert$, see e.g.~\cite{xiu2010:book}. Our choice might be suboptimal and further investigation with respect to this matter are out of scope here. 
 %\textcolor{red}{The plot is in semi-logarithmic scale and shows exponential decay of the error. This is expected due to the analytic dependence of the bifurcation point on $\yy$, see the proof of Proposition \ref{prop:bif_rv_hom_rand} and \cite{xiu2010:book}. }
    The root mean-square error is estimated using 10,000 samples in $\Gamma$. 
    %\Cquest{The underlying sparse-grid-approximations are based on symmetric Leja collocation points (see e.g.\ \cite{marchi:leja}), that are among the suitable family of points for uniform distributions. }

    A sufficiently accurate gPC approximation $p^*_{\text{PC},\Lambda}$ can then be sampled to estimate the pdf of the bifurcation points. In Figure \ref{fig:eig_het_rand} panel (c) we display an estimate of the pdf of the first three bifurcation points obtained with 10,000 samples. %  of $p^*_{\text{PC},6}$.  
    Note that we employ the same multi-index set %\Cquest{truncation $z=6$} 
    for all the bifurcation points, since we expect a similar dependence on $\yy$.   
 
    %As hinted above, the coefficients can be computed in a non-intrusive manner, which in this case means solving the deterministic eigenvalue problems corresponding to chosen collocation points in $\Gamma$.
	%This means that one can then solve the eigenvalue problems corresponding to the prescribed collocation points in parallel, store all the required eigenvalues and ``recombine'' them according to the methodology recalled in Appendix \ref{appendix:sg}.   

    \begin{figure}
		  \begin{minipage}[t]{0.49\textwidth}
			\centering
			\includegraphics[scale=0.45]{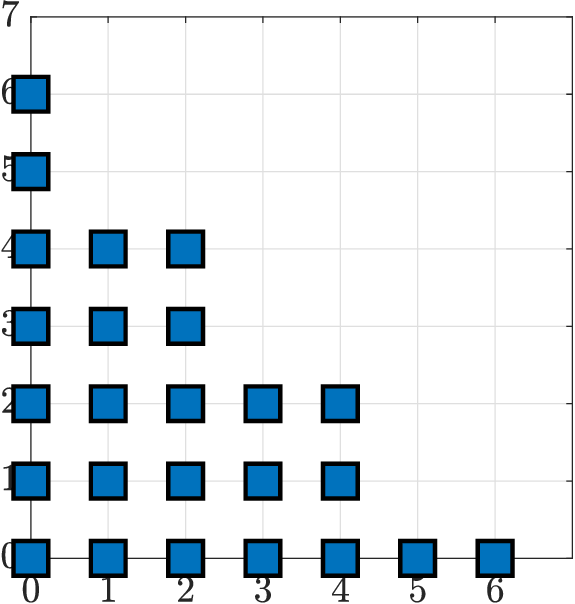}
			\subcaption{gPC multi-index set $\Lambda$.}
		\end{minipage} \hfill
        \begin{minipage}[t]{0.49\textwidth}
			\centering
			\includegraphics[scale=0.45]{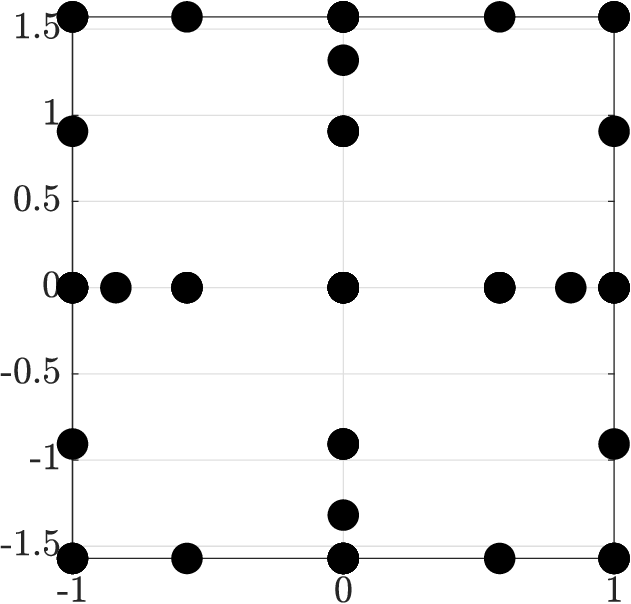}
			\subcaption{Sparse grid consisting of symmetric Leja points.}
		\end{minipage}
		\caption{Allen--Cahn equation with $g(x,\yy) = y_1 \cos(y_2 x)$, $Y_1 \sim \text{Unif}(-1,1)$, $Y_2 \sim \text{Unif}(-\pi/2,\pi/2)$. }
	\label{fig:sg_pc}
	\end{figure}
   
    \begin{figure}[tb]
        \begin{minipage}[t]{0.49\textwidth}
			\centering
			\includegraphics[scale=0.48]{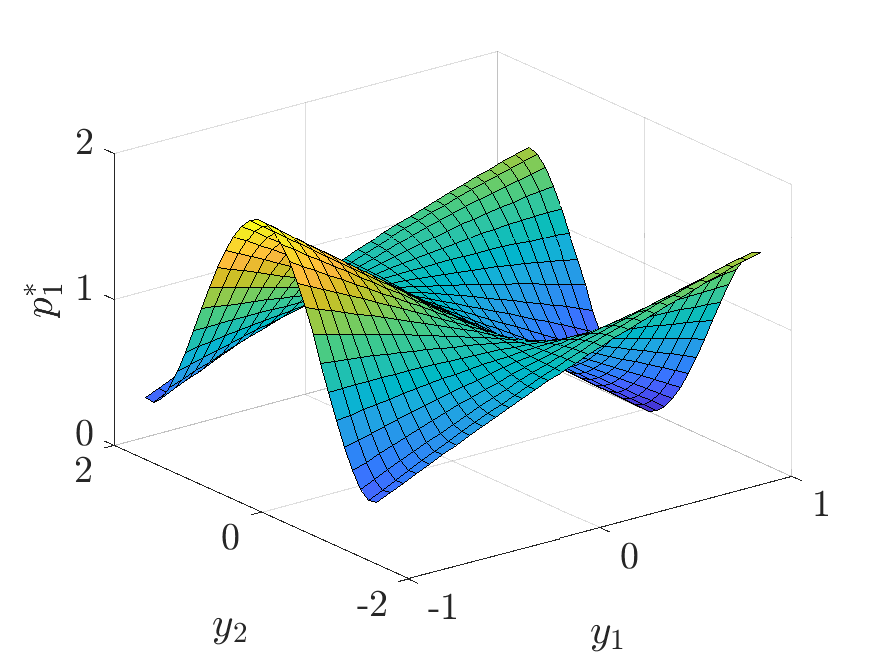}
			\subcaption{gPC approximation $p^*_{\text{PC},\Lambda}$ of $p^*_1$.}
		\end{minipage}
        \begin{minipage}[t]{0.49\textwidth}
			\centering
			\includegraphics[scale=0.45]{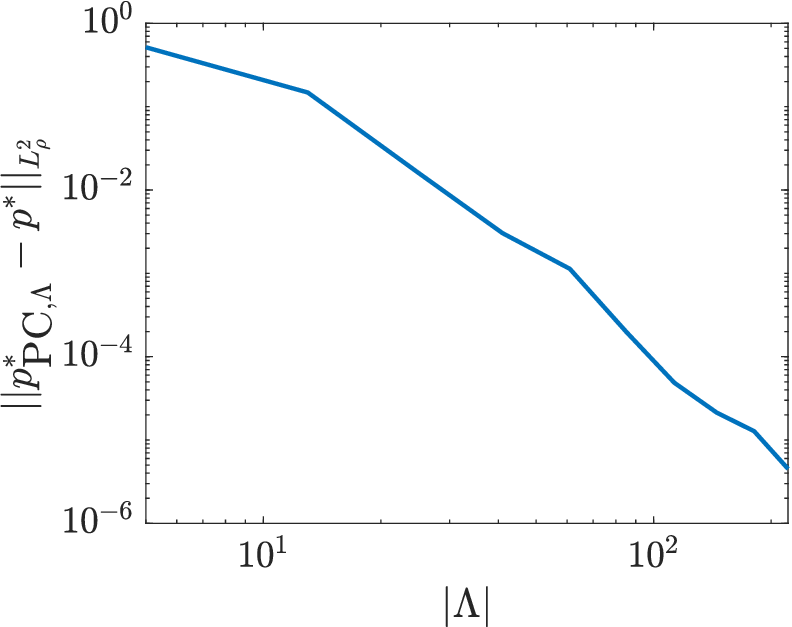}
			\subcaption{Convergence of the gPC approximation $p^*_{\text{PC},\Lambda}$ of $p^*_1$.}
		\end{minipage} \\
	    \begin{minipage}{0.5\textwidth}
			\centering
			\includegraphics[scale=0.49]{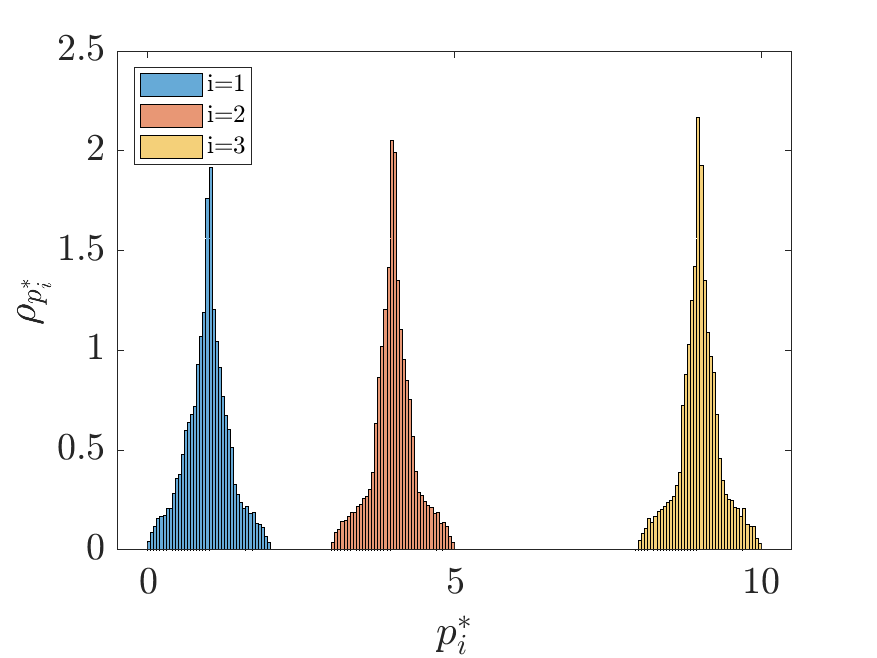}
			\subcaption{Pdf estimate of $ p_i^*(\YY)$, $i=1,2,3$.}
		\end{minipage}% \hspace{-4em}
		\begin{minipage}{0.5\textwidth}
			\centering
			\includegraphics[scale=0.49]{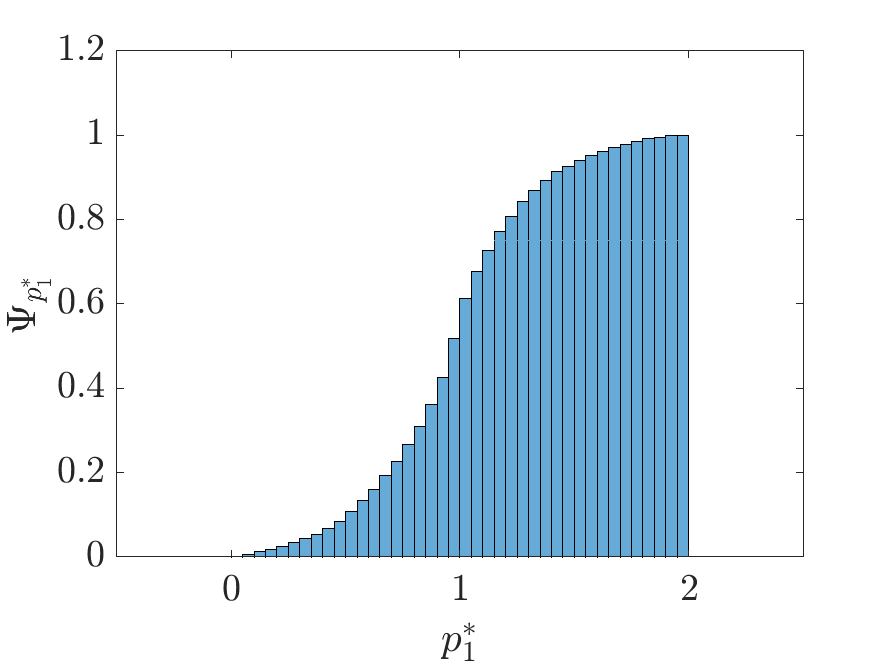}
			\subcaption{Empirical cdf of $ p_1^*(\YY)$.}
		\end{minipage}
		\caption{Allen--Cahn equation with $g(x,\yy) = y_1 \cos(y_2 x)$, $ Y_1 \sim \text{Unif}(-1,1)$, $ Y_2 \sim \text{Unif}(-\pi/2,\pi/2)$. The plots in panel (a), (c), and (d) are obtained with $\Lambda$ as in Figure \ref{fig:sg_pc} (a).} 
  %\Cquest{We display gPC approximations and statistical properties of the bifurcation points $p_i^*$}.
		\label{fig:eig_het_rand}
	\end{figure}
 
 	  %\begin{itemize}
		%$$\item \Cadd{If we do not manage to show it in the infinite dim. case, argue here that the bifurcation values are random variables. We can also argue that they have finite second-order moments in the discrete case.}
	%\end{itemize}
	%This can be done by sampling the surrogate models and applying a kernel density estimator to such samples, see e.g.\ \cite{parzen1962}. Note that pdf estimation via surrogate models shows some limitations \cite{sagiv2022}, but this investigation is out of scope here.

    \paragraph{Probability of bifurcating}
    As discussed in Section \ref{sect:UQ_hom_rand}, the probability of bifurcating at values $ p \leq \bar p$  
    %the boundary of the stability region of the trivial equilibrium can be assessed by means of the 
    is given by the cdf of the first bifurcation point, see \eqref{eq:cdf}.
    In this case, we can use the same samples of the gPC expansion \eqref{eq:PC_bifurcation} used to estimate the pdf to obtain the empirical cdf of $p_1^*$. We display it in Figure \ref{fig:eig_het_rand} panel (d).  
    
    \paragraph{Nontrivial bifurcation curves}
    The key assumption in order to use the gPC method for the approximation of bifurcation curves is that each branch can be interpreted as a two-dimensional stochastic process (cf.\ Assumption \ref{hp:u_regularity}). 
    The presentation of the gPC expansion in Section \ref{sect:PC} can then be straightforwardly extended to the case of stochastic processes 
    in $L^2_{\rho}(\Gamma;\Rset) \times L^2_{\rho}(\Gamma;L^2(D;\Rset))$.
    
    Thus, we employ the gPC expansion for each component of $\gamma_i(\YY)$ in \eqref{eq:branch_het_rand_general} and write        
    \begin{equation}\label{eq:PC_branches}
		\begin{aligned}
			r(s,\yy) & \approx r_{\text{PC}, \Lambda}(s,\yy) := \sum_{\boldsymbol{\alpha} \in \Lambda} \widehat{r}_{\boldsymbol{\alpha}}(s) \psi_{\boldsymbol{\alpha}}(\yy),  \\
			u(s,\yy) & \approx u_{\text{PC}, \Lambda}(s,\yy) := \sum_{\boldsymbol{\alpha} \in \Lambda} \widehat{u}_{\boldsymbol{\alpha}}(s) \psi_{\boldsymbol{\alpha}}(\yy)
		\end{aligned}
	\end{equation}
    with multi-index set $\Lambda$ dictated by the employed sparse-grids-based approximation as described in Appendix \ref{appendix:sg}, coefficients $\widehat{r}_{\boldsymbol{\alpha}}$ and $\widehat{u}_{\boldsymbol{\alpha}}$ that depend on the index $s \in [0,S]$ of the stochastic process, and polynomials $\psi_{\boldsymbol{\alpha}}$, as described in Section \ref{sect:PC}. 
    Note that we approximate both curve components in the same polynomial space (i.e.\ we use the same multi-index set $\Lambda$) since we expect them to have a similar dependence on $\yy$. 
        
    The computation of the expansion coefficients requires the numerical approximation of the deterministic bifurcation curves corresponding to the collocation points identified by the sparse-grid scheme (see Appendix \ref{appendix:sg}). Crucially, all such deterministic branches must share the same curve parametrization by $s$ such that the ensemble of deterministic curves can be recombined to obtain the coefficients of the gPC expansion. This is achieved by the so-called pseudo-arclength continuation algorithm that we briefly recall in Appendix \ref{appendix:num_cont}. The approximation of each curve is obtained stepping on the arclength parameter, and, once we fix this step to be the same for each branch realization, we ensure that all curves are provided within the same reference framework. As in the spatially-homogeneous case, we employ the numerical continuation software Continuation Core and Toolboxes (COCO) \cite{dankowicz2013} (version October 26, 2023) available at \cite{coco:sourceforge} for the approximation of each realization. 

    In Figure \ref{fig:bif_samples_het_rand} we display realizations of the first bifurcation curve for three scalar observables associated to the $u$-component of the branch. The curves in black are the corresponding mean bifurcation curves \eqref{eq:mean_bif_curve} which are given by the first coefficient of the gPC expansion (see Section \ref{sect:PC}). We display the mean bifurcation diagram in Figure \ref{fig:mean_sol_het_rand} panel (a) and the solution along the mean bifurcation branch in panel (b). In particular, the trivial solution is obtained for $p=\mathbb{E}[p_1^*(\YY)]$ and the curve with the largest maximum value corresponds to the largest value of $p$ considered (i.e., the point where we stop the numerical continuation method for the computation of the reference branch).
    
    In Figure \ref{fig:conv_het_rand} we show the convergence behavior of the gPC expansion \eqref{eq:PC_branches} at value of arclength $s=5$
    with respect to sets $\Lambda$ with increasing cardinality. 
    %\Cquest{increasing values of the total degree $z$.} 
    The reference solution constists of 313 terms with total degree smaller or equal than 24.
    %\Cquest{has total degree 30}. 
    The plot is in log-log scale and shows a decay similar to the one obtained for the bifurcation points. The root mean-square error is obtained using 10,000 samples in $\Gamma$. The underlying sparse-grid-approximations are based on symmetric Leja collocation points, see e.g.~\cite{marchi:leja}.

    %Thus, the PC approximation of $\Gamma(\yy)$ boils down to the approximation of a stochastic process $u(s,\yy):[0,+\infty) \times \Gamma \rightarrow H_0^1(D;\Rset)$ assumed to belongs to the Bochner space $L^2_{\rho}(\Xi;H^1_0(D;\Rset))$. We write 

	%As observed in Section \ref{sect:PC}, the first coefficient of the PC expansion is the expected value of the considered quantity, i.e.\ we can formally write
	%	\[
		%(\mathbb{E}[p](s),\mathbb{E}[u](s)) = (\widehat{p}_{\bf 0}(s),\widehat{u}_{\bf 0}(s))
		%\] 
		%and define the following curve
		%\[
		%\Gamma^+_{e} = \{ (\widehat{p}_{\bf 0}(s),\widehat{u}_{\bf 0}(s)) \, \vert \, s \in [0,+\infty), (\widehat{p}_{\bf 0}(0),\widehat{u}_{\bf 0}(0)) = (\mathbb{E}[p^*],0) \}.
		%\]	
  
    \begin{figure}[tb]
		\begin{minipage}{0.32\textwidth}
			\centering
			\includegraphics[width = 0.9\textwidth]{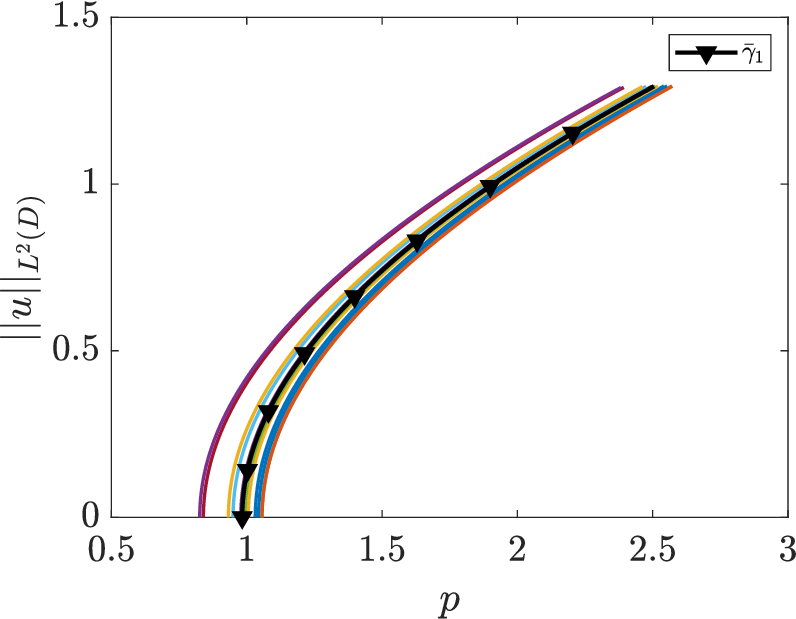}
			\subcaption{$\mcU = \lVert u \rVert_{L^2(D)}$.}
		\end{minipage}% \hspace{-4em}
        \begin{minipage}{0.32\textwidth}
			\centering
			\includegraphics[width = 0.9\textwidth]{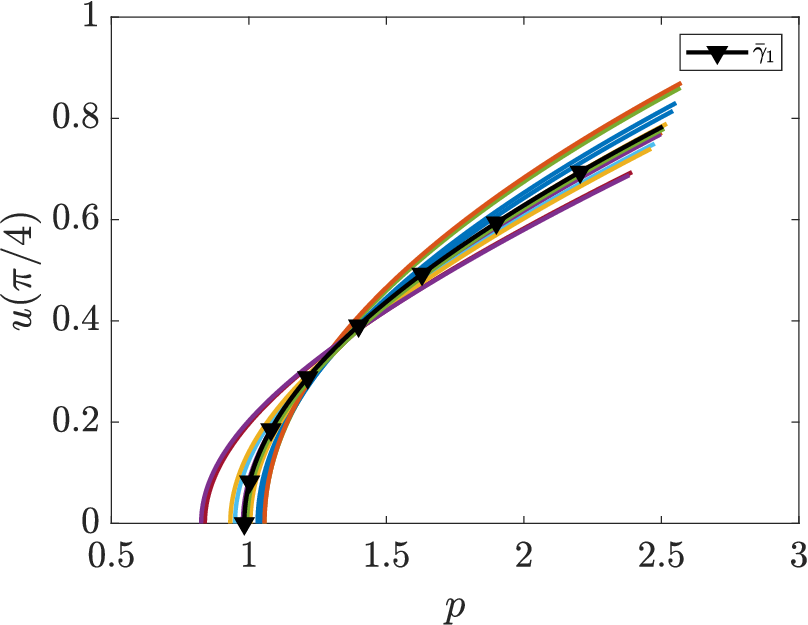}
			\subcaption{$\mcU = u(\pi/4)$.}
		\end{minipage}% \hspace{-4em}
		\begin{minipage}{0.32\textwidth}
			\centering
			\includegraphics[width = 0.9\textwidth]{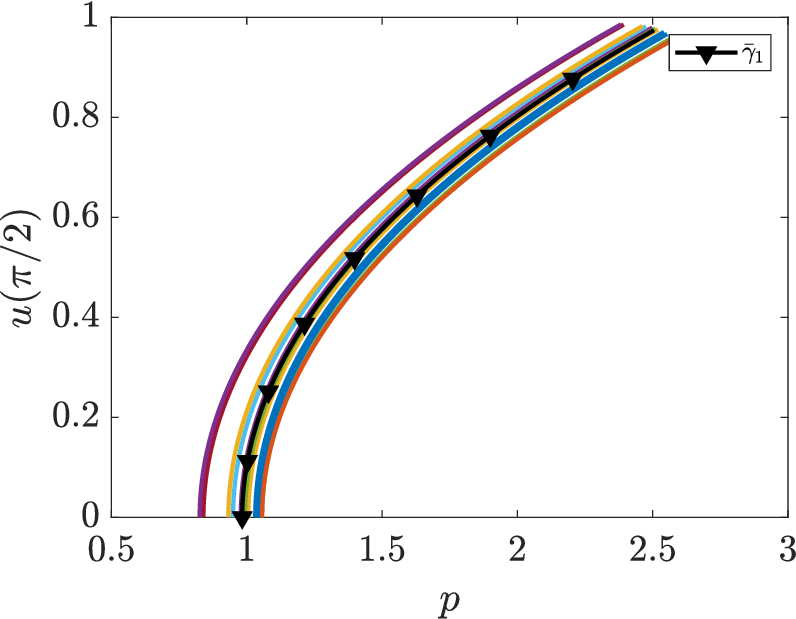}
			\subcaption{$\mcU = u(\pi/2)$.}
		\end{minipage}
		\caption{Allen--Cahn equation with $g(x,\yy) = y_1 \cos(y_2 x)$, $Y_1 \sim \text{Unif}(-1,1)$, $Y_2 \sim \text{Unif}(-\pi/2,\pi/2)$. Realizations of the first bifurcation branch for three different scalar observables $\mcU$. We mark in black the corresponding mean bifurcation curve \eqref{eq:mean_bif_curve}.}
		\label{fig:bif_samples_het_rand}
	\end{figure}

    \begin{figure}
		\begin{minipage}[t]{0.48\textwidth}
			\centering
			\includegraphics[scale=0.45]{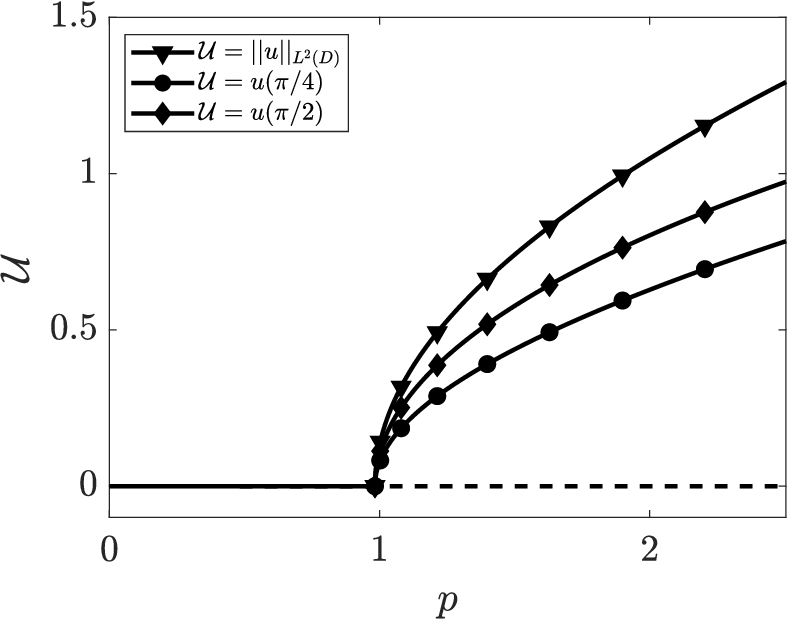}
			\subcaption{Mean bifurcation diagram for the observables $\mcU$.}
		\end{minipage}
        \begin{minipage}[t]{0.48\textwidth}
			\centering
			\includegraphics[scale=0.45]{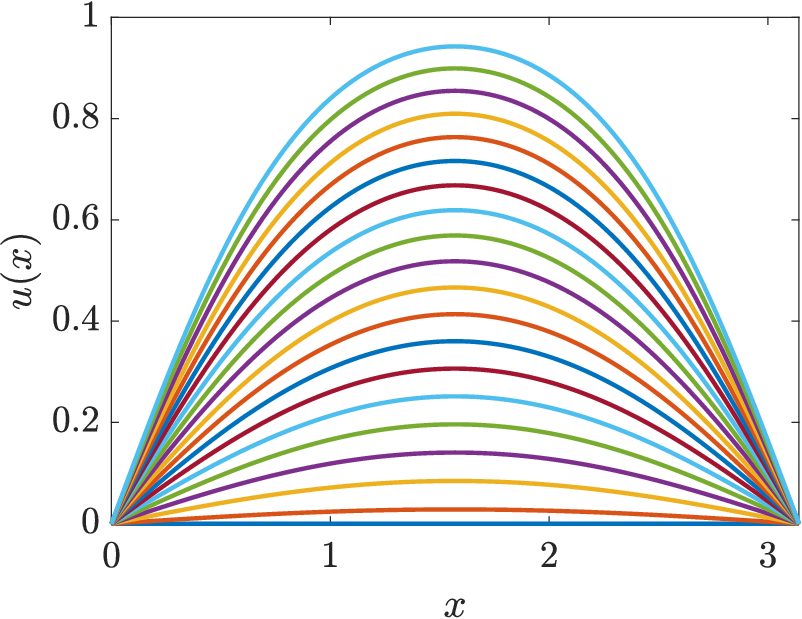}
			\subcaption{Solution along the mean bifurcation branch. The different curves correspond to increasing values of the bifurcation parameter $p$.}
		\end{minipage}
		\caption{Allen--Cahn equation with $g(x,\yy) = y_1 \cos(y_2 x)$, $Y_1 \sim \text{Unif}(-1,1)$, $Y_2 \sim \text{Unif}(-\pi/2,\pi/2)$. }
	\label{fig:mean_sol_het_rand}
	\end{figure}

    \begin{figure}
        \centering
        \includegraphics[scale=0.5]{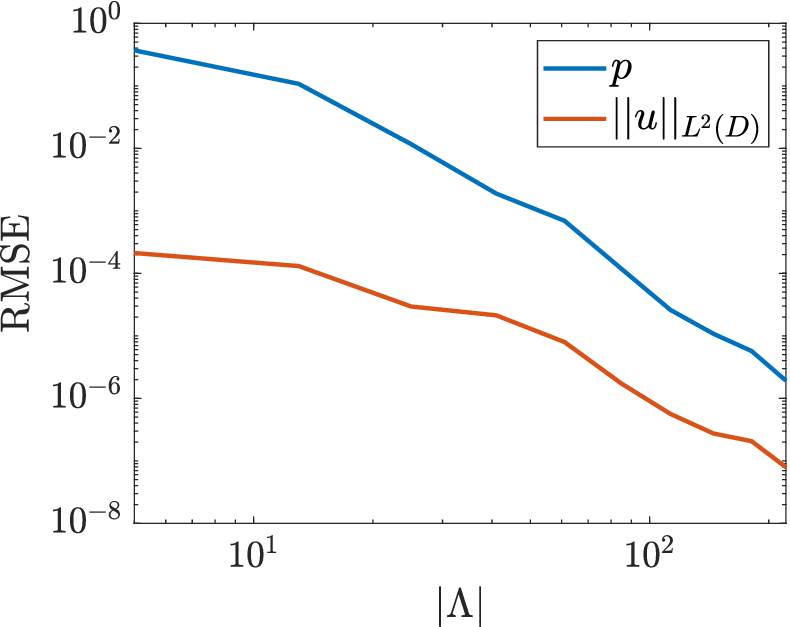}
        \caption{Allen--Cahn equation with $g(x,\yy) = y_1 \cos(y_2 x)$, $ Y_1 \sim \text{Unif}(-1,1)$, $Y_2 \sim \text{Unif}(-\pi/2,\pi/2)$. Convergence of the gPC approximation of the first bifurcation branch at $s = 5$.}
        \label{fig:conv_het_rand}
    \end{figure}

%%%%%%%%%%%%%%%%%%%%%%%%%%%%%%%%%%%%%%%%%%%%%%%%%%%%%%%%%%%%%%%%%%%%%%%%%%%%%%%%%%%%%%%%%%%%%%%%%%%%%%%%%%%%%%%%%%%%%%%%%%%%%%%%%%%%%%%%%%%%%

\section{Discussion, conclusion and outlook}\label{sect:conclusion}
In this work we considered the Allen--Cahn equation with random coefficients. We discussed the forward UQ analysis in the presence of bifurcations, which   
%
%Specifically, we modeled the uncertainty by a random field involving hyper-parameters and discretized by a finite number of random variables. In the considered setting, one of the hyper-parameters, namely the expected value of the random field, leads to supercritical pitchfork bifurcations. 
%
%The UQ analysis of PDEs in presence of bifurcation 
poses the challenge of dealing with non-uniqueness. Indeed, a small variation of the bifurcation parameter in a neighborhood of a bifurcation point leads to a change in the number of the steady states and multiple solutions correspond to the same parameter value. 

We tackled this challenge in two main steps. 
First, we translated the Allen--Cahn equation with random coefficients to a probability space associated with a finite number of random variables considered in the model.
%\Cquest{parametric problem, with the parameter space being the Cartesian product of the image spaces of the random variables involved in the model.} %Note that these parameters are different from (and should not be confused with) the bifurcation parameter and the other hyper-parameters in the random field model. 
In this setting, the classic bifurcation analysis of deterministic dynamical systems is employed pathwise for the stochastic problem (i.e.\ for each realization of it) to conclude about the occurrence of supercritical pitchfork bifurcations in the random dynamical system. Moreover, bifurcation points are shown to be random variables and bifurcation curves to be random curves. 
Second, we proposed a numerical methodology to assess the uncertainty affecting bifurcation points and corresponding branches that relies on the gPC expansion. Crucially, the approach takes advantage of the previous bifurcation analysis to obtain gPC approximations of bifurcation points and bifurcation curves separately. For the computation of the gPC coefficients we rely on sparse-grids stochastic collocation that results in solving a set of deterministic eigenvalue problems and approximating by numerical continuation methods the corresponding nontrivial branches of equilibria. 

The proposed gPC approach returns surrogate models that can be explored to perform efficiently the UQ analysis. For example, the probability of bifurcating can be computed by sampling the gPC model of the first bifurcation point. Furthermore, the gPC expansion can be used to extract information on the branches, such as tangent and curvature along them that in some application might be informative of some features of the dynamical system (think for example at the velocity of approaching a certain bifurcation point).  It is therefore of great interest for us to extend this work towards real-world problems and tackle possible problem-specific questions.
We also believe that our approach has a large potential for the study of random dynamical systems with more involved bifurcation diagrams, for example \cite{Morgan2014,Uecker2020}. A natural future research direction is the investigation of more complicated models, e.g.\ when secondary bifurcations arise at nontrivial equilibria, or when the randomness alters the bifurcating behavior. The surrogate models could be used to check the bifurcation conditions in a computationally efficient manner. Moreover, we plan to consider different random field models to account for different random spatially-heterogeneous effects in the PDE model and identify possible hyper-parameters leading to bifurcations. % This type of analysis could be integrated within the research field on hyper-parameters estimation.  

%%%%%%%%%%%%%%%%%%%%%%%%%%%%%%%%%%%%%%%%%%%%%%%%%%%%%%%%%%%%%%%%%%%%%%%%%%%%%%%%%%%%%%%%%%%%%%%%%%%%%%%%%%%%%%%%%%%%%%%%%%%%%%%%%%%%%%%%%%%%%%

\section*{Acknowledgments}
We thank the reviewers whose comments and suggestions helped to improve and clarify this manuscript.
Chiara Piazzola acknowledges the support of the Alexander von Humboldt foundation. Christian Kuehn would like to thank the VolkswagenStiftung for support via a Lichtenberg Professorship. Chiara Piazzola is a member of the Gruppo Nazionale Calcolo Scientifico-Istituto Nazionale di Alta Matematica (GNCS-INdAM).

%%%%%%%%%%%%%%%%%%%%%%%%%%%%%%%%%%%%%%%%%%%%%%%%%%%%%%%%%%%%%%%%%%%%%%%%%%%%%%%%%%%%%%%%%%%%%%%%%%%%%%%%%%%%%%%%%%%%%%%%%%%%%%%%%%%%%%%%%%%%%%

\appendix
\section{Numerical continuation} \label{appendix:num_cont}

Branches of equilibria (we denote them simply by $\gamma$ in the following) are implicitly defined by a nonlinear equation $F(p,\uu)=0$ that depends on the bifurcation parameter $p$ and has to be understood as the spatially-discretized version of the stationary Allen--Cahn equation, cf.\ \eqref{eq:AC_discrete_hom}, \eqref{eq:AC_discrete_het}, i.e.\ $\uu \in \Rset^m$. Roughly speaking, numerical continuation is a method to solve such parametric equations, where the solution obtained at a certain value of $p$ is then used as an initial guess for computing the solution at another value of the parameter. In other words, a solution is extended (or equivalently continued) from a value of the parameter to another one, see e.g.\ \cite{kuznetsov2013,uecker2021}. 

In this work, instead of stepping on the parameter $p$ (usually called natural continuation, see \cite{kuznetsov2013}), we resort to another classic version of numerical continuation, the so-called pseudo-arclength continuation, which is based on the arclength parametrization of a bifurcation curve. In practice, this means solving the following system of equations,
\begin{equation}\label{eq:num_cont}
    H(p,\uu) = \begin{pmatrix}
	F(p,\uu) \\ \mu(p,\uu,s)
	\end{pmatrix} = 0,  %\quad G: \Rset \times \Rset^M \rightarrow \Rset^M,
\end{equation}
where the nonlinear problem $F(p,\uu)=0$ is coupled with an additional equation that enforces the arclength parametrization. In details, given a starting point $\gamma^{(0)} = (p(s_0),\uu(s_0))$ at $s = s_0$ and the corresponding tangent vector $\dot{\gamma}^{(0)} = (\dot{p}(s),\dot \uu(s)) \vert_{s=s_0}$, the constraint is given by 
\[
\mu(p,\uu,s) =  (1-\xi) \; \dot p(s_0) \; \big( p(s)-p(s_0) \big)  + \xi \; \langle \dot \uu(s_0), \uu(s)-\uu(s_0) \rangle  - (s-s_0) . 
\]
Note that $\mu(p,\uu,s)=0$
defines a hyperplane perpendicular to $\dot{\gamma}^{(0)}$ at distance $\Delta s = s -s_0$ from $\gamma^{(0)}$ (see Figure \ref{fig:continuation} for an illustration) in the inner product
\[
\bigg \langle 
\begin{pmatrix}
    a \\ b
\end{pmatrix}, \begin{pmatrix}
    c \\ d
\end{pmatrix} \bigg \rangle_{\xi} = (1-\xi) \, ac + \xi \langle b,d \rangle, \quad \, \text{with } \, \xi \in (0,1),  
\]
with $\langle \cdot, \cdot \rangle$ being the Euclidean inner product in $\Rset^m$. 
The system \eqref{eq:num_cont} is then usually solved by a predictor-corrector strategy. 
Starting with $\gamma^{(0)}$, the prediction $\widetilde{\gamma}^{(1)}$ after one step of length $\Delta s$ is obtained in the tangent space of the curve as follows
\[
\widetilde{\gamma}^{(1)} =\gamma^{(0)} + \Delta s \dot{\gamma}^{(0)}.
\]
The correction step is implemented by the Newton's method 
\[
\gamma^{(1)} = \widetilde{\gamma}^{(1)} - J_H \big (\widetilde{\gamma}^{(1)} \big)^{-1} H \big (\widetilde{\gamma}^{(1)} \big )
\]
with $J_H \big (\widetilde{\gamma}^{(1)} \big )$ being the Jacobian matrix of $H$ at the point $\widetilde{\gamma}^{(1)}$. 
\begin{figure}[htb]
	\centering
	\resizebox{0.4\textwidth}{!}{\begin{tikzpicture}[>=angle 45]
	\tikzstyle{every node}=[font=\fontsize{10}{10}\selectfont]
	\begin{axis}[
		% x=1cm,y=1cm,
		axis lines=middle,
		axis line style=very thin,
		xmin=0,
		xmax=2,
		ymin=0,
		ymax=1.5,
		xlabel=$p$,
		ylabel=$\mathcal{U}$,
		ticks = none]
		% \clip(-0.1,-0.1) rectangle (0.8,0.5);
		\draw[line width=1.5pt, smooth,samples=20,domain=0.4:1.5] plot(\x,{0.8 - (1 -\x)^2});
		\draw [->,line width=1pt] (0.7,0.71) -- (1.2,1);
		\draw [line width=1pt,dash pattern=on 1pt off 1pt] (1.2,1)-- (1.34,0.69);
		%\begin{scriptsize}
		\draw [fill=black] (0.7,0.71) circle (1.5pt);
		\draw[color=black,font=\tiny] (0.7,0.8) node {$\gamma^{(j)}$};
		\draw [fill=black] (1.2,1) circle (1.5pt);
		\draw[color=black,font=\tiny] (1.2,1.05) node {$\widetilde{\gamma}^{(j+1)}$};
		\draw [fill=black] (1.34,0.69) circle (1.5pt);
		\draw[color=black,font=\tiny] (1.5,0.71) node {$\gamma^{(j+1)}$};
		%\end{scriptsize}
		\end{axis}
\end{tikzpicture}}
	\caption{Illustration of the pseudo-arclength continuation method. For visualization purposes we display $\mcU$ on the vertical axis which is a scalar observable associated to $\uu$.}
	\label{fig:continuation}
\end{figure}

\section{Sparse-grids-based computation of the gPC coefficients}\label{appendix:sg}
    In this appendix we briefly recall the basics of sparse grids and summarize the approach implemented in the Sparse Grids Matlab kit \cite{piazzola2022:sparse.grids,sgmk:github} for the computation of the gPC approximation \eqref{eq:PC_truncated}. 
    
    A sparse-grids-based approximation $f_{\text{SG}}$ of an $N$-variate function $f: \Gamma \rightarrow \Rset$ can be written as a linear combination of Lagrangian interpolants on a set of suitably chosen tensor grids on $\Gamma$, each of them consisting of a small number of points. In formulas, we have
    \begin{equation}\label{eq:sg}
    f(\yy)  \approx f_{\text{SG}}(\yy) := \sum_{\ii \in \mcI} c_{\ii} \mcL_{\ii}(\yy), \quad \text{with} \quad c_{\ii}: = \sum_{\substack{{\jj} \in \{0,1\}^N \\ \ii+{\jj} \in \mcI}} (-1)^{\lVert {\jj} \rVert_1},
    \end{equation}
    where $\mcI \subset \Nset^N_+$ is a multi-index set and $\mcL_{\ii}(\yy)$ is a tensor Lagrangian interpolant of $f$ built over the tensor grid $\mathcal{T}_{\ii}$ identified by the multi-index $\ii$. 
    In order to define the tensor grid $\mathcal{T}_{\ii}$, we introduce the following univariate set of $K_n \in \mathbb{N}_+$ collocation points along $y_n$ 
	\[
		\mathcal{T}_{i_n} = \left\{y_{n,m(i_n)}^{(j_n)}: j_n=1, \ldots, K_n \right\} \quad \text{ for } n=1,\ldots,N,
	\]
    where the function $m$ is the so-called ``level-to-knots'' function that associates to each level $i_n$ a number of points:
	\begin{equation*}
		m:\mathbb{\mathbb{N}}_+ \rightarrow \mathbb{N}_+ \text{ such that } m(i_n) = K_n.
	\end{equation*}
    Then, the tensor grid is given by 
    \[
        \mathcal{T}_{\ii} = \left\{\yy_{m(\ii)}^{(\jj)}\right\}_{\jj \leq m(\ii)},
    	\quad  \text{ with } \quad
    	\yy_{m(\ii)}^{(\jj)} = \left[y_{1,m(i_1)}^{(j_1)}, \ldots, y_{N,m(i_N)}^{(j_N)}\right]
    	\text{ and } \jj \in \mathbb{N}^N_+,
    \]
    where $m(\ii) = \left[m(i_1),\,m(i_2),\ldots,m(i_N) \right]$ and $\jj \leq m(\ii)$ means that $j_n \leq m(i_n)$ for every $n = 1,\ldots,N$.
    The choice of collocation points is based on the distribution of each parameter $y_n$ and the function $m$ is then chosen accordingly for efficiency reasons, see \cite{piazzola2022:sparse.grids} and references therein for a thorough discussion. 
    The choice of the multi-index set depends on the regularity of $f$, see e.g.~\cite{bungartz2004,Chkifa2014}. One common option is the following
    \begin{equation}\label{eq:I_sparse}
		\mathcal{I} = \{ \ii \in \mathbb{N}^N_+ : \sum_{n=1}^N (i_n-1) \leq w \}  
	\end{equation}
	for some $w \in \Nset$. We refer to \cite{piazzola.eal:user.manual} for an overview on the multiple other options that are available in the Sparse Grids Matlab Kit.
 
    Then, from such sparse-grids-based approximation the gPC equivalent expansion can be made by reformulating each Lagrangian interpolant $\mcL_{\ii}$ in terms of the appropriate family of orthogonal polynomials and then summing up all the terms. For the detailed algorithm we refer to \cite{formaggia2013}. Clearly, the gPC multi-index set $\Lambda$ in \eqref{eq:PC_truncated} is completely determined by $\mathcal{I}$ and the function $m$, see \cite{back2011}. It holds
    \[
    \Lambda =\{\aalpha \in \Nset^N : \aalpha \leq m(\ii)-\bf{1} \, \text{with } \ii \in \mathcal{I} \}. 
    \]        
    For the example in Section \ref{sect:UQ_het_rand} we consider $\mathcal{I}$ as in \eqref{eq:I_sparse} with $w=3$. The corresponding gPC multi-index set is displayed in Figure \ref{fig:sg_pc}, panel (a). Further, we use symmetric Leja points that are an appropriate choice for uniform random variables together with $m(i_n)=2i_n-1$. The resulting sparse grid consisting of 25 points and is shown in Figure \ref{fig:sg_pc}, panel (b).

\bibliographystyle{abbrvurl}
\bibliography{biblio}

\end{document}